\theoremstyle{plain}
\newtheorem{theorem}{Theorem}[section]
\newtheorem{corollary}[theorem]{Corollary}
\newtheorem{lemma}[theorem]{Lemma}
\newtheorem{proposition}[theorem]{Proposition}
\theoremstyle{definition}
\newtheorem{definition}[theorem]{Definition}
\theoremstyle{remark}
\newtheorem{remark}[theorem]{Remark}
\newtheorem{remarks}[theorem]{Remarks}
\newtheorem{example}[theorem]{Example}
\renewcommand{\leq}{\leqslant}
\renewcommand{\geq}{\geqslant}
\newcommand{\bbc}{\mathbb{C}}
\newcommand{\bbr}{\mathbb{R}}
\newcommand{\bbp}{\mathbb{P}}
\newcommand{\bbe}{\mathbb{E}}
\newcommand{\bbn}{\mathbb{N}}
\newcommand{\bbk}{\mathbb{K}}
\newcommand{\cc}{\mathcal{C}}
\newcommand{\cf}{\mathcal{F}}
\newcommand{\ca}{\mathcal{A}}
\newcommand{\pf}{\longrightarrow}
\newcommand{\abs}[1]{\left| #1 \right|}
\newcommand{\norm}[1]{\left\| #1 \right\|}
\newcommand{\loi}[2]{\left] #1 , #2 \right]}                             
\newcommand{\roi}[2]{\left[ #1 , #2 \right[}
\newcommand{\oi}[2]{\left] #1, #2 \right[}
\newcommand{\ci}[2]{\left[ #1, #2 \right]}
\newcommand{\rosi}[2]{\roi{[#1}{#2[}}
\newcommand{\csi}[2]{\ci{[#1}{#2]}}
\newcommand{\E}{\widetilde{E}}
\newcommand{\rd}{\widetilde{\bbr^d}}
\newcommand{\change}[1]{\textcolor{black}{#1}}
\begin{document}

\begin{frontmatter}

\title{The Fourth Characteristic of a Semimartingale}
\runtitle{Fourth Characteristic}


\author{\fnms{Alexander} \snm{Schnurr}\corref{}\ead[label=e1]{schnurr@mathematik.uni-siegen.de}}
\address{Alexander Schnurr \\ University Siegen,
Department of Mathematics \\ Walter-Flex-Street 3, 57068 Siegen
\\ \printead{e1}}

\runauthor{Alexander Schnurr}

\begin{abstract}
We extend the class of semimartingales in a natural way.
This allows us to incorporate processes having paths that leave the state space $\bbr^d$.
In particular Markov processes related to sub-Markovian kernels\change{, but also non-Markovian processes with path-dependent behavior.}
By carefully distinguishing between two killing states, we are able to introduce a fourth semimartingale characteristic which generalizes the fourth part of the L\'evy quadruple. 
Using the probabilistic symbol, we analyze the close relationship between the generators of certain Markov processes with killing and their (now four) semimartingale characteristics. 
\end{abstract}

\begin{keyword}[class=AMS]
\kwd[Primary ]{60J75}  \kwd[; secondary ]{60J25} \kwd{60H05}
\kwd{47G30} \kwd{60G51}
\end{keyword}

\begin{keyword}
\kwd{semimartingale} \kwd{killing} \kwd{Markov process}
\kwd{symbol} 
\end{keyword}
\end{frontmatter}

\section{Introduction} 

Let $Z=(Z_t)_{t\geq 0}$ be a L\'evy process.     
Assume in addition that $Z$ is conservative, that is, $\bbp^z(Z_t\in\bbr^d)=1$ for every $t\geq 0$ and each starting point $z\in \bbr^d$. 
It is a well known fact that the characteristic function $\varphi_{Z_t}:\bbr^d\to\bbc$
can be written as $(t\geq 0)$
\begin{align} \label{charexp}
  \varphi_{Z_t}(\xi)=\bbe^z\left( e^{i(Z_t-z)'\xi} \right) = \bbe^0\left( e^{iZ_t'\xi} \right) =  e^{-t\psi(\xi) }
\end{align}
where 
\begin{align} \begin{split}\label{lkf}
  \psi(\xi)&= -i \ell'  \xi + \frac{1}{2} \xi'Q \xi
      - \int_{\bbr^d\backslash\{0\}} \left(e^{i y'\xi} -1 - i y'\xi \cdot \chi(y) \right) \,  N(dy).
\end{split}\end{align}
Here, $\ell\in\bbr^d$, $Q$ is a positive semidefinite matrix, $N$ the so called L\'evy measure (cf. \cite{sato} (8.2)) and $\chi:\bbr^d\to \bbr$ is a cut-off function. 
Every conservative L\'evy process is a semimartingale with respect to its natural filtration. 
The semimartingale characteristics $(B_t, C_t, \nu(\cdot,dt))$ are a generalization of the L\'evy triplet $(\ell, Q, N)$.


The characteristic exponent $\psi$ of each L\'evy process is a continuous negative definite function in the sense of Schoenberg (cf. \cite{bergforst}, Section 7). 
Each function of this class can be represented in the following way:
\begin{align} \begin{split}\label{lkfa}
  \phi(\xi)&= a -i \ell'  \xi + \frac{1}{2} \xi'Q \xi
      - \int_{\bbr^d\backslash\{0\}} \left(e^{i y'\xi} -1 - i y'\xi \cdot \chi(y) \right) \,  N(dy).
\end{split}\end{align}
Even with the additional component $a>0$ one can associate a stochastic process $\widetilde{Z}$ with this characteristic exponent via \eqref{charexp}. This process is the L\'evy process $Z$ associated with $(\ell,Q,N)$ with the following modification: with $a$ we associate a killing time, which is exponentially distributed with parameter $a$ and independent of $Z$. The new process $\widetilde{Z}$ (with killing) behaves like $Z$, but as soon as the killing time is reached, it jumps to $\Delta$.
It will become handy to write this as
\[
  \widetilde{Z_t}=Z_t+K_t
\]
where $K=(K_t)_{t\geq 0}$ denotes the `killing process' which only attends the values $\{0,\Delta\}$. 
Let us emphasize already here that $\Delta$ denotes \emph{one} distinguished killing state and we set $\Delta+r:= \Delta$ for every $r\in\bbr^d$. In the case of L\'evy processes, $Z$ and $K$ are independent and one killing state is sufficient. Obviously, there exist more interesting processes admitting (state-space-)dependent killing. Let us mention that this very simple class of L\'evy processes with killing is not included in the classical semimartingale setting. In \cite{symbolkillingopenset} we have introduced a class of semimartingales admitting a \emph{predictable} killing, but even in this framework L\'evy processes with killing are not included. This is somehow not satisfactory since they perfectly fit into the framework of sub-Markovian kernels and hence Markov processes which are in turn closely linked to semimartingales (cf. Cinlar et al. \cite{vierleute}). 

We have seen that it is canonical to associate the quadruple $(a,\ell, Q,N)$ with a L\'evy process with killing. This allows us to take the whole class of continuous negative definite functions into account (cf. \cite{bergforst} Theorem 10.8.). 
In the semimartingale framework there exists by now no equivalent concept to the fourth part $a$ of the L\'evy quadruple. It is the aim of this paper to provide a natural extension of the class of semimartingales along with a fourth characteristic. 

Since the three characteristics have become canonical, let us give a second motivation, why it is useful to introduce a fourth characteristic. 
There is an intimate relationship between Markov processes and semimartingales which has been studied in \cite{vierleute}, \cite{symbolkillingopenset} and \cite{che-fil-yor}. Let us elaborate on this relationship in the case of Feller processes with sufficiently rich domain. 

Let $\ca$ be the generator of the Feller semigroup. It is a well known fact (\cite{niels1} Section 4.5 and \cite{courrege}) that if the test functions $C_c^\infty(\bbr^d)$ are contained in the domain $D(\ca)\subseteq C_0(\bbr^d)$ of $\ca$,
this operator can be written as
\begin{align} \label{pseudo}
    \ca u(x)= - \int_{\bbr^d} e^{ix'\xi} q(x,\xi) \hat{u}(\xi) \, d\xi \hspace{1cm}  (u\in C_c^\infty(\bbr^d))
\end{align}
where $\hat{u}(\xi)=1/(2\pi)^d\int e^{-iy'\xi}u(y) dy$ denotes the Fourier transform and $q:\bbr^d \times \bbr^d \pf \bbc$ is locally
bounded and for fixed $x$ a continuous negative definite function in the
co-variable, that is, 
\begin{align} \begin{split}\label{lkfax}
  q(x, \xi)&= a(x) -i \ell(x)'  \xi + \frac{1}{2} \xi'Q(x) \xi
      - \int_{\bbr^d\backslash\{0\}} \left(e^{i y'\xi} -1 - i y'\xi \cdot \chi(y) \right) \,  N(x,dy).
\end{split}\end{align}
This function $q$ is called (functional analytic) symbol of the Feller process. Usually it is directly assumed that $a=0$ (cf. \cite{symbolkillingopenset}, \cite{schilling98}, \cite{BoetSchi2009}, \cite{levymatters3}). 
Hence, non-predictable killing is excluded.
In \cite{symbolkillingopenset} we have shown that under this additional assumption every such Feller process is a semimartingale and that there is a close relationship between the generator and the semimartingale characteristic (cf. in this context \c{C}inlar et al. \cite{vierleute}). Subsequently, we establish a framework that allows to handle the extended setting. In this framework Feller processes with sufficiently rich domain are semimartingales, \change{even if they jump to the point-of-no-return $\partial$ in a non-predictable way}. Moreover, these Feller processes belong to the natural extension of what is often called homogeneous diffusion with jumps (cf. \cite{jacodshir} Section III.2c). 

The reader might wonder why the fourth characteristic has been overlooked for quite a long time. Let us try to give a partial answer: 
\change{Starting with the L\'evy example from above, the fourth characteristic should describe a kind of `local killing rate'. Let $X$ be the solution of some martingale problem or be given by a family of sub-Markovian kernels. Writing down the state-space dependent killing rate at zero for such a process in a straightforward way one gets
\begin{align} \label{killingratepartial}
  \lambda^x:=\lim_{h\downarrow 0} \frac{\bbp^x(X_h = \partial)}{h}.
\end{align}
The point $\partial$ can be reached in different ways: jump to $\partial$ as soon as a certain value in space or time is reached, instant killing after an exponential waiting time, an accumulation of jumps having higher-and-higher jump intesity after each jump etc. While the last phenomenon can be described by the classical three characteristics (here, the third one). This is not the case for the other phenomena.
Therefore, one has to separate the killing time. Doing this in the canonical way (predictable vs. totally inaccessible) leads nowhere. In fact one has to separate between explosion vs. everything else. The explosion part can be described by the classical characteristics, while the remainder part is described by the fourth characteristic. Together this yields a full description of the paths leaving $\bbr^d$, allowing e.g. for a general representation result (cf. Theorem 2.5.). }

The notation closely follows \cite{jacodshir}. By $E$ we denote a closed subset of $\bbr^d$ and $\partial$ is the point-of-no-return which will be separated into two points subsequently. 
A function $\chi:\bbr^d\to\bbr$ is called cut-off
function if it is Borel measurable, with compact support and equal
to one in a neighborhood of zero. In this case $h(y):= \chi(y)\cdot
y$ is a truncation function in the sense of \cite{jacodshir}.
We will work on the canonical space, hence $\Omega$ denotes the space of c\`adl\`ag functions $\omega:\bbr_+\to E\cup \{\partial\}$ such that $\omega(t-)=\partial$ or $\omega(t)=\partial$ implie $\omega(u)=\partial$ for $u\geq t$. Analogously for $\Delta$ and $\infty$. As usual $X_t(\omega):=\omega(t)$ for $t\geq 0$, 
\begin{align*}
  \cf^X:=\sigma(X_s:s\geq 0) \text{ and } \cf^X_t:=\sigma(X_s:0\leq s \leq t). 
\end{align*}
Vectors $v$ are thought of as column vectors and we denote the transposed vector by $v'$. 

The paper is organized as follows: in the subsequent section we present the definitions and our main results (Theorem \ref{thm:markovsemimart} and Theorem \ref{thm:stoppedsymbol}). Some more examples from various areas of the theory of stochastic processes are analyzed shortly in Section 3. The final section consists of the proofs of the main results along with some side remarks. 

\section{Definitions and Main Results}


One of the main ideas in order to handle the fourth component is to distinguish carefully between two ways of killing, that is, of leaving the set $E\subseteq \bbr^d$ and to associate two different killing states ($\infty$ and $\Delta$) with the two types of killing. 
Consequently our processes live on $\E:=E\cup\{\infty,\Delta\}$. As a topological space $E_\infty:=E\cup \{\infty\}$  is the Alexandrov compactification of $E$. By adding another point, that is, by applying another Alexandrov extension, we get the isolated point $\Delta$. If not mentioned otherwise, every function $f$ on $E$ is extended to $\E$ by setting $f(\Delta)=f(\infty)=0$. 

Most of the time we will work with one probability measure $\bbp$, but sometimes we take the starting point into account, that is, we consider a stochastic basis $(\Omega, \cf ,(\cf_t)_{t\geq 0},\bbp^x)_{x\in \E}$. In this case it is always assumed that the process under consideration is \emph{normal}, i.e., $\bbp^x(X_0=x)=1$. 

\begin{definition}
The sequence of stopping times $(\sigma_n')_{n\in\bbn}$ given by ($n\geq 1$)
\[
  \sigma_n':=\inf \{ t \geq 0: \norm{X_t-x} \geq n \text{ or } \norm{X_{t-}-x} \geq n \},
\]
is called \emph{separating sequence}. Dealing with $(\bbp^x)_{x\in \E}$, all the stopping times depend on the starting point $x$. Since $x$ is most of the time fixed in our calculations we refrain from using a sub- or superscript $x$.
\end{definition} 
The sequence $(\sigma_n')_{n\in\bbn}$ will be used subsequently in order to separate explosion and other ways of killing:
		if a process is defined via the martingale problem or by a sub-Markovian family of kernels, usually a single ideal point, say $\partial$, is added to the state space. A posteriori it is possible to divide this ideal point into $\infty$ and $\Delta$. Let $\zeta^\partial$ be the stopping time, when $X$ leaves $E$. If $\sigma_n'(\omega)$ converges to $\zeta^\partial(\omega)$ without reaching it we set $X(\omega)=\infty$ on $\rosi{\zeta^\partial}{+\infty}$ otherwise we set it equal to $\Delta$. 
		In an analogous way we separate the stopping time $\zeta^\partial$. The ideal point is either reached by an explosion $\zeta^\infty$ or by a sudden killing (`jump') $\zeta^\Delta$:
		\begin{align} \begin{split} \label{killingtimes}
		  \zeta^\Delta&:=\begin{cases} \zeta^\partial & \text{,if } \sigma_n' = \zeta^\partial \text{ for some } n\in\bbn \\
			                     +\infty         & \text{,if } \sigma_n' < \zeta^\partial \text{ for all } n\in\bbn
						 \end{cases}  \\
		  \zeta^\infty&:= \begin{cases} \zeta^\partial & \text{,if } \sigma_n' < \zeta^\partial \text{ for all } n\in\bbn \\
			                     +\infty         & \text{,if } \sigma_n' = \zeta^\partial \text{ for some } n\in\bbn
						 \end{cases}  \\
		  \sigma_n&:= \begin{cases} \sigma_n' & \text{,if } \sigma_n' < \zeta^\partial \\
			                     +\infty         & \text{,if } \sigma_n' = \zeta^\partial
						 \end{cases}		
		\end{split} \end{align}
we obtain that $\{\zeta^\infty<+\infty\}$ and $\{\zeta^\Delta<+\infty\}$ are disjoint and that $\zeta^\infty$ is a predictable time with announcing sequence $\sigma_n\wedge n$. We set $\norm{\Delta}:=\norm{\infty}:=\norm{\partial}:=\infty$ (cf. Cherdito et al. \cite{che-fil-yor}).

\begin{definition} \label{def:killedprocess}
Let $E$ be a closed subset of $\bbr^d$. Let $X$ be a stochastic process on the stochastic basis 
$(\Omega, \cf^X ,(\cf^X_t)_{t\geq 0},\bbp)$ with values in $\E$. Let $\zeta^\infty$ be an explosion, that is, 
$\sigma_n\wedge n < \zeta^\infty$ for every $n\in\bbn$
and let $\zeta^\Delta$ be a stopping time. 

$X$ is called a \emph{process with killing} if
    \begin{align} \label{flowofprocess}
      X\cdot 1_{\rosi{0}{\zeta^\infty}}\subseteq \change{E} \text{, } X\cdot 1_{\rosi{\zeta^\infty}{\zeta^\Delta}}=\infty \text{ and } 
			X\cdot 1_{\rosi{\zeta^\Delta}{+\infty}}=\Delta.
    \end{align}
		
Here, we set as usual $\roi{\zeta^\infty(\omega)}{\zeta^\Delta(\omega)}=\emptyset$ if $\zeta^\infty(\omega) \geq \zeta^\Delta(\omega)$. In the special case that $\zeta^\Delta=+\infty$, we call $X$ a \emph{process with explosion}. 
\end{definition}

As we have pointed out above and as it will become clear in the subsequent sections, it is important to distinguish between $\infty$ and $\Delta$. To this end we define the following:
$\Delta+r=\Delta$ and $\infty+r=\infty$ for every $r\in\bbr^d$,
$\Delta\cdot s = \Delta$ and $\infty\cdot s = \infty$ for every $s\in\bbr$,
$\infty+\infty=\infty$,
$\Delta+\Delta=\Delta$
and finally
$\Delta+\infty=\Delta$.
The last point is in line with \eqref{flowofprocess} in the sense that we allow a transition from $\infty$ to $\Delta$, but not in the other direction. If we start with $\partial$ as described above, a transition from $\infty$ to $\Delta$ does not happen; in Example \ref{ex:firstexamples}(3) we encounter a situation where such a transition is natural. 


\change{We extend the class of semimartingales (cf. \cite{jacodshir} Definition I.4.21) in two steps: We call a process $X$ with explosion a \emph{semimartingale with explosion} if for the announcing sequence $\sigma_n\wedge n$ every $X^{(\sigma_n \wedge n)-}$ is a (classical) semimartingale.  
This definition could indeed be used for every predictable killing time. This is not needed subsequently. A reader who is interested in the details might consult the Appendix of \cite{symbolkillingopenset}. }

The most simple case of a semimartingale with predictable killing which is \emph{not} an explosion is the locally constant process given by 
$X_t^x=x$ on $\roi{0}{1}$ and being killed at time 1. Although this is a predictable time the process somehow jumps `all of a sudden' to infinity. In the proofs it turned out that it is more convenient and more natural to consider this kind of killing -- which is predictable but not an explosion -- in the $\Delta$ and $\zeta^\Delta$ context. Now we have to incorporate this (possibly) non-predictable killing into the semimartingale framework. 

\begin{definition} \label{def:gensemi}
A process with killing $\widetilde{X}=(\widetilde{X_t})_{t\geq 0}$ is called \emph{generalized semimartingale}, if it can be written in the following form:
$\widetilde{X_t}=X_t+K_t$
where $X=(X_t)_{t\geq 0}$ is a semimartingale with explosion and $K=(K_t)_{t\geq 0}$ is a killing process, that is, for a stopping time $\zeta^\Delta$
\[
  K_t=\Delta \cdot 1_{\rosi{\zeta^\Delta}{+\infty}}.
\]
\end{definition}


As in the classical case, the class of generalized semimartingales might often be too general to be used in application. \change{After some general results} we will describe some useful subclasses, like so called `autonomous processes' which are natural extensions of homogeneous diffusions with jumps.
Furthermore, we will show that a wide class of Markov processes is contained in this class.


Let $\widetilde{X}=X+K$ be a generalized semimartingale. The first three characteristics $(B,C,\nu)$ are defined (pre-)locally up to the predictable time $\zeta^\infty$: $X^{(\sigma_n\wedge n)-}$ is a classical semimartingale for every $n\in\bbn$. Hence, the three characteristics can be defined for these pre-stopped processes (cf. \cite{protter} Theorem II.6) in the classical way, cf. e.g. \cite{jacodshir} Section II.2. The characteristics of $X$ (and hence $\widetilde{X}$) are defined to be equal to these localized characteristics on $\rosi{0}{(\sigma_n\wedge n)}$ for every $n\in\bbn$. 

Following our motivating examples from above, the fourth characteristic should describe the local killing rate. In analogy to reliability theory and the theory of point processes we define the following: 

\begin{definition}
Let $\widetilde{X}$ be a generalized semimartingale with values in $\E$. The fourth characteristic $(A_t)_{t\geq 0}$ is the predictable compensator (cf. \cite{jacodshir} Theorem I.3.17) of the process $1_{\{ \widetilde{X}=\Delta \}}$ on $\csi{0}{\zeta^\Delta}$ that is the unique predictable process $A$ of finite variation such that $M$ defined via
\[
  M_t:= \left(1_{\{\widetilde{X}_t=\Delta\} }-A_t\right)^{\zeta^\Delta}
\]
is a local martingale. 
\end{definition}

Obviously the uniqueness of the process $A$ only holds up to the killing time $\zeta^\Delta$. Hence,  it is natural to define this last characteristic on $\csi{0}{\zeta^\Delta}$. 

\change{The reader might wonder, why $\Delta$ is not just included in the state space and the compensation of $\Delta$ in $\nu$. It is not the actual jump, which is important in compensation. It is the \emph{jump size}. If $\Delta$ is put inside the Euclidean space, one generates a non-canonical finite jump-size. If $\Delta$ is infinitely far away, the jump size is $+\infty$. Such a jump cannot be compensated. What should one subtract from a process with this jump in order to get a local martingale (stopping before this jump is also not an option, since the jump time might not be predictable). Hence, we follow the idea of the classical three characteristics: do not compensate the whole process, but something related to the process. For the second characteristic this is the continuous part of the square bracket; for the fourth characteristic it is $1_{\{\widetilde{X}_t\in\Delta\} }$. }

By the Definition \ref{def:gensemi} and Theorem II.2.34 of \cite{jacodshir} it is possible to directly derive a general representation result by localization. 

\change{\begin{theorem}
Let $X$ be a generalized semimartingale with having characteristics $(A,B,C,\nu)$. Then $X$ can be written as
\[
X=X_0+K+X^c+ (\chi\cdot \text{id})*(\mu^X-\nu)+ \sum_{s\leq \cdot} (\Delta X_s-\chi(\Delta X_s)\cdot\Delta X_s) + B
\]
\end{theorem}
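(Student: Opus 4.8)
The plan is to reduce the statement to the classical canonical decomposition of a semimartingale (\cite{jacodshir}, Theorem II.2.34) applied to the semimartingale-with-explosion part $X$, and then to glue the killing part $K$ and the explosion part together by localization along the separating sequence. Write $\widetilde X=X+K$ as in Definition \ref{def:gensemi}, where $X$ is a semimartingale with explosion with announcing sequence $\sigma_n\wedge n$ for $\zeta^\infty$, and $K_t=\Delta\cdot 1_{\rosi{\zeta^\Delta}{+\infty}}$. The first step is to fix $n$ and work with the pre-stopped process $X^{(\sigma_n\wedge n)-}$, which by definition is a classical semimartingale taking values in $E\subseteq\bbr^d$. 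Apply Theorem II.2.34 of \cite{jacodshir} to this process with the fixed truncation function $h(y)=\chi(y)y$: this yields
\[
X^{(\sigma_n\wedge n)-}=X_0+X^{c,n}+(h)*(\mu^{X^{(\sigma_n\wedge n)-}}-\nu^n)+\sum_{s\leq\cdot}\bigl(\Delta X_s-h(\Delta X_s)\bigr)\cdot 1_{\rosi{0}{\sigma_n\wedge n}}+B^n,
\]
where $X^{c,n}$ is the continuous local martingale part, $\mu^{X^{(\sigma_n\wedge n)-}}$ the jump measure of the pre-stopped process, and $(B^n,C^n,\nu^n)$ its characteristics, which by construction agree with the characteristics $(B,C,\nu)$ of $X$ on $\rosi{0}{\sigma_n\wedge n}$.

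The second step is to check compatibility of these local decompositions as $n$ varies and to pass to the limit. Because $\sigma_n\wedge n\uparrow\zeta^\infty$, the continuous martingale parts $X^{c,n}$ are consistent (each is the stopped version of the next), so they define a single process $X^c$ on $\rosi{0}{\zeta^\infty}$; likewise $B^n$, the stochastic integral $(h)*(\mu-\nu)$ restricted to each interval, and the finite sum of compensated large jumps all agree on overlaps by the uniqueness part of the canonical decomposition and the pre-local definition of the characteristics given just before the statement. Note that $\chi(\Delta X_s)\cdot\Delta X_s=h(\Delta X_s)$ and, using the convention $f(\infty)=f(\Delta)=0$ for the extended $\chi$, the jump into the killing state contributes nothing to the sum $\sum_{s\leq\cdot}(\Delta X_s-\chi(\Delta X_s)\cdot\Delta X_s)$, which is therefore carried entirely by $X$ on $\rosi{0}{\zeta^\infty}$. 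Taking $n\to\infty$ gives, on $\rosi{0}{\zeta^\infty}$,
\[
X=X_0+X^c+(\chi\cdot\mathrm{id})*(\mu^X-\nu)+\sum_{s\leq\cdot}\bigl(\Delta X_s-\chi(\Delta X_s)\cdot\Delta X_s\bigr)+B.
\]

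The third step is to put the killing part back in. On $\rosi{0}{\zeta^\infty}$ we have $\widetilde X=X$ (since $K\equiv 0$ there, as $\zeta^\Delta$ and $\zeta^\infty$ have disjoint finiteness sets, or more precisely $\Delta+\infty=\Delta$ handles the overlap), and on $\rosi{\zeta^\infty}{+\infty}$ the process $X$ equals $\infty$ while the conventions $\infty+r=\infty$, $\infty+\Delta=\Delta$ make $\widetilde X=X+K$ consistent with Definition \ref{def:killedprocess}. Adding $K$ to both sides of the displayed identity and using that the right-hand side (other than $X_0$) vanishes identically after $\zeta^\infty$ by our extension convention for functions on $E$, we obtain exactly
\[
\widetilde X=X_0+K+X^c+(\chi\cdot\mathrm{id})*(\mu^X-\nu)+\sum_{s\leq\cdot}\bigl(\Delta X_s-\chi(\Delta X_s)\cdot\Delta X_s\bigr)+B,
\]
which is the claimed formula. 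The main obstacle I expect is not any single computation but the bookkeeping at the two killing states: one must verify that all the summands on the right behave correctly under the arithmetic conventions $\Delta+r=\Delta$, $\infty+r=\infty$, $\Delta+\infty=\Delta$ and the convention that functions on $E$ vanish at $\infty$ and $\Delta$, so that the classical decomposition valid only on the random open interval $\rosi{0}{\zeta^\infty}$ genuinely extends to an identity of processes on $\bbr_+$ once $K$ is included; in particular one should note that the formula makes no attempt to compensate the jump into $\Delta$ (consistent with the remark preceding the theorem that such a jump has infinite size and cannot be compensated), so $K$ appears uncompensated and the fourth characteristic $A$ does not enter this particular representation.
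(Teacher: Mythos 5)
Your argument is correct and follows essentially the same route as the paper, which itself only sketches the proof in one line: apply the classical canonical decomposition (Jacod--Shiryaev, Theorem II.2.34) pre-locally along the announcing sequence $\sigma_n\wedge n$ of $\zeta^\infty$, patch the pieces together by uniqueness, and adjoin the uncompensated killing part $K$ using the arithmetic conventions at $\Delta$ and $\infty$. Your version simply makes the localization and the bookkeeping at the two killing states explicit, which is consistent with the paper's remark that $\Delta$ is reached only by $K$ while $\mu^X$, $\nu$, $B$ and $X^c$ live pre-locally on $\rosi{0}{\zeta^\infty}$.
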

Recall that $\Delta$ can only be reached by $K$ while $\infty$ might be reached by the other parts. The jump measure $\mu^X$ as well as its compensator $\nu$ are defined pre-locally up to $\zeta^\infty$. The same is true for $B$ and $X^c$. }

\begin{example} \label{ex:firstexamples}(1) Let $\widetilde{Z}$ be a L\'evy process with killing. It is a generalized semimartingale by the arguments we have recalled in the Introduction. The first three characteristics are $(\ell t, Qt, dtN(dy))$. Recall that L\'evy processes can not explode. The fourth characteristic of this process is
\[
  A_t= at \hspace{10mm} \text{ on } \hspace{10mm} \csi{0}{\zeta^\Delta}
\]
where $a$ is the fourth component of the L\'evy quadruple as well as the local killing rate (cf. $\lambda^x$ in \eqref{killingrate} below).
This can be seen as follows: the compensator of a Poisson process $(P_t)_{t\geq 0}$ is $(at)_{t\geq 0}$. By stopping at $\zeta^\Delta$ we obtain that 
\[
  ((P_t-at)^{\zeta^\Delta})_{t\geq 0}
\] 
is a local martingale. 

(2) Next we consider spontaneous killing. In case of the process (for $\bbp^x$)
\begin{align*}
  \widetilde{X}_t=x\cdot 1_{\loi{0}{1}}+\Delta 1_{\oi{1}{+\infty}}
\end{align*}
the fourth characteristic is $1_{\{\widetilde{X}_t=\Delta\}}$. We obtain such a compensator whenever the killing process $K$ is predictable.

(3) Let $X$ be the solution of an SDE with locally Lipschitz coefficients. We consider the case where $X$ has explosions. Define $\widetilde{X}:= X+K$ where $K$ is an exponential killing independent of $X$ with killing rate one. This yields a process with a possible transition from $\infty$ to $\Delta$. 

\change{(4) Non-Markovian killing is included in our theory: Let $X$ be any semimartingale. We define the new process 
\[
Y_t:=\begin{cases}  \Delta & \text{if there exists }  t\in [0,1[ \text{ such that }X_t<0 \\ X_t & \text{else.} 
\end{cases}
\]
The killing process is predictable but not Markovian. }
\end{example}  

\change{In order to have an example on how to generalize three-characteristics-results to the new setting, we include the following theorem. The proof is a combination of the classical one (\cite{jacodshir} Theorem II.2.21.) plus ideas from the proof of our Theorem 2.13. 
\begin{theorem}
Let $\widetilde{X}$ be a process with killing and let $\tau_n:=\sigma_n\wedge n$ be the announcing sequence of $\zeta^\infty$. There is equivalence between: \\
(a) $\widetilde{X}$ is a generalized semimartingale with characteristics $(A,B,C,\nu)$.  \\
(b) The following processes are local martingales for each $n$. \\
\hspace*{2mm} (i) $M(h)^{\tau_n}$ where $M(h):=\widetilde{X}(h)-B-\widetilde{X}_0$ \\
\hspace*{2mm} (ii) $(M(h)^jM(h)^k-\widetilde{C}^{jk})^{\tau_n}$ for each $0\leq j,k \leq d$  \\
\hspace*{2mm} (iii) $(g*\mu^{\widetilde{X}^{\tau_n}}-g*\nu)^{\tau_n}$ for $g\in \cc^+(\bbr^d)$  \\
\hspace*{2mm} (iv)  $\left(1_{\{\widetilde{X}_t=\Delta\} }-A_t\right)^{\zeta^\Delta}$\\
where $h=\chi\cdot$id, $\widetilde{C}$ is the modified second characteristic (cf. \cite{jacodshir} Definition II.2.16.). In the same monograph $\cc^+(E)$ resp. $\widetilde{X}(h)$ can be found in II.2.20 resp. II.2.4.
\end{theorem}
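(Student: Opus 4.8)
The plan is to split the four conditions into the three \emph{classical} ones (i)--(iii), which I would reduce to a version of Theorem~II.2.21 of \cite{jacodshir} localised along the announcing sequence $(\tau_n)_n$, and the new condition (iv), which is essentially a restatement of the definition of the fourth characteristic. The two blocks are logically independent, so I would establish each separately and combine at the very end, using Definition~\ref{def:gensemi} to reassemble $\widetilde{X}=X+K$.

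\emph{The fourth component first.} Since $\widetilde{X}$ is a process with killing, Definition~\ref{def:killedprocess} gives $\{\widetilde{X}_t=\Delta\}=\{t\geq\zeta^\Delta\}$, so that $\bigl(1_{\{\widetilde{X}_t=\Delta\}}\bigr)^{\zeta^\Delta}=1_{\rosi{\zeta^\Delta}{+\infty}}$ is a bounded, hence integrable, increasing process. By Theorem~I.3.17 of \cite{jacodshir} it admits a predictable compensator, unique on $\csi{0}{\zeta^\Delta}$. Condition~(iv) says exactly that the process denoted $A$ in the quadruple \emph{is} this compensator, which by the definition of the fourth characteristic is precisely the assertion ``$A$ is the fourth characteristic of $\widetilde{X}$''. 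This step uses neither (i)--(iii) nor any semimartingale property, so it can be carried out once and for all.

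\emph{The first three components.} Recall from Definition~\ref{def:gensemi} and the discussion of the first three characteristics above that ``$\widetilde{X}$ is a generalized semimartingale with first three characteristics $(B,C,\nu)$'' means: $\widetilde{X}=X+K$ with $K$ a killing process and $X$ a semimartingale with explosion, in such a way that each pre-stopped process $X^{(\sigma_n\wedge n)-}$ is a classical semimartingale whose classical characteristics restricted to $\rosi{0}{\sigma_n\wedge n}$ coincide with $(B,C,\nu)$. Since $\tau_n=\sigma_n\wedge n<\zeta^\infty$, on $\rosi{0}{\tau_n}$ the process $\widetilde{X}$ stays in $E\subseteq\bbr^d$ except, possibly, after the killing time $\zeta^\Delta$; but a passage to $\Delta$ is a jump of infinite norm, and therefore --- exactly as in the representation theorem above, where $\mu^X$, $\nu$, $B$ and $X^c$ are all built pre-locally up to $\zeta^\infty$ and are blind to the value $\Delta$ --- it contributes nothing to $\widetilde{X}(h)$, to the modified second characteristic $\widetilde{C}$, nor to $\mu^{\widetilde{X}^{\tau_n}}$ or $g*\nu$ (the last two living on $\bbr^d\setminus\{0\}$, and $g(\Delta)=0$ by our extension convention). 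Hence each process in (i)--(iii), stopped at $\tau_n$, is identical with the corresponding Jacod--Shiryaev process attached to the \emph{classical} semimartingale $X^{(\sigma_n\wedge n)-}$ and the restricted triplet $(B,C,\nu)$. I would then invoke Theorem~II.2.21 of \cite{jacodshir}: in the direction (a)$\Rightarrow$(b) it applies verbatim; in the direction (b)$\Rightarrow$(a) one first notes that (i) together with the finite variation of $B$ forces $X^{(\sigma_n\wedge n)-}(h)$, hence $X^{(\sigma_n\wedge n)-}$, to be a classical semimartingale, after which II.2.21 identifies its characteristics. Running $n$ through $\bbn$ and appealing once more to Definition~\ref{def:gensemi} (with $K$ the killing part already present in $\widetilde{X}$), this is precisely the statement that $\widetilde{X}$ is a generalized semimartingale with first three characteristics $(B,C,\nu)$.

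\emph{Conclusion and the delicate point.} Combining the two blocks, (b) is equivalent to ``(i)--(iii) hold for every $n$ and (iv) holds'', hence to ``$\widetilde{X}$ is a generalized semimartingale with first three characteristics $(B,C,\nu)$ and $A$ is its fourth characteristic'', i.e.\ to (a). The step that needs genuine care is the identification in the third paragraph: one must check that stopping $\widetilde{X}$ \emph{at} $\tau_n$ rather than pre-stopping at $\sigma_n\wedge n$, and the possible occurrence of the $\Delta$-killing before $\tau_n$, really do not perturb the classical objects --- that $\widetilde{X}(h)$, $\mu^{\widetilde{X}}$ and $\widetilde{C}$ genuinely ``do not see'' the killing. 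This is governed entirely by the conventions fixed before Definition~\ref{def:gensemi} (big jumps to $\infty$ and to $\Delta$ are excised, functions on $E$ are extended by $0$) and by the pre-local construction underlying the representation theorem; everything else is the classical theory applied level by level, together with Theorem~I.3.17 of \cite{jacodshir} for the existence of $A$ (cf.\ also Example~\ref{ex:firstexamples} for how the fourth characteristic looks in the simplest cases).
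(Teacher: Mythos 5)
Your proposal is correct in outline and follows essentially the recipe the paper itself gives (it only sketches the proof in one sentence): conditions (i)--(iii) are the classical Theorem II.2.21 of \cite{jacodshir} applied level by level along the announcing sequence $\tau_n$, and condition (iv) is, by Theorem I.3.17 of \cite{jacodshir}, nothing but the statement that $A$ is the predictable compensator of the bounded increasing process $1_{\{\widetilde X=\Delta\}}$ on $\csi{0}{\zeta^\Delta}$, i.e.\ the definition of the fourth characteristic; the two blocks then recombine via Definition \ref{def:gensemi}. The one place where your argument and the paper's intended one diverge is exactly the point you flag as delicate: if $\zeta^\Delta\leq\tau_n$ with positive probability, the stopped process $\widetilde X^{\tau_n}$ carries the jump to $\Delta$, and with the paper's arithmetic conventions ($\Delta+r=\Delta$, $\Delta\cdot s=\Delta$) the process $M(h)^{\tau_n}$ is formally $\Delta$-valued after $\zeta^\Delta$, so ``local martingale'' only makes sense once the excision of that jump is made explicit. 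You resolve this by appealing to the conventions ($h(\Delta)=0$, $g(\Delta)=0$) to argue that the classical objects are blind to the killing; the paper's pointer to the proof of Theorem \ref{thm:markovsemimart} suggests the cleaner device used there: identify $\Delta$ with a finite point $\widehat\partial\in\bbr^d\setminus E$, so that $\widetilde X^{\tau_n}$ becomes an honest $\bbr^d$-valued classical semimartingale whose killing jump is compensated inside $\nu$ by the extra term $a(X_t)\,\varepsilon_{\widehat\partial-X_t}(dy)\,dt$; Theorem II.2.21 then applies verbatim, and undoing the identification splits that extra term off as the fourth characteristic, which is precisely your condition (iv). With that substitution your ``delicate point'' disappears, and the rest of your argument --- including the (b)$\Rightarrow$(a) step where (i) plus the finite variation of $B$ forces the pre-stopped processes to be classical semimartingales --- is the standard one.
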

Let us emphasize that - along the same lines - various other results on the fourth characteristic can be established which are analogous to those found in \cite{jacodshir} Section II.2. }

\change{\begin{remark}
The theory of stochastic integration can now be generalized as well. It works for the explosion part pre-locally as in the classical case. A-posteriori one includes the instant-killing into the integral process by adding a killing process $K$ having as killing time the minimum of the $\Delta$-killing times of the integrand and the integrator.
\end{remark}
}

\change{From now on we restrict ourselves to the following (still quite general) class of stochastic processes:}

\begin{definition} \label{def:autsemi}
An \emph{autonomous semimartingale} 
$(\widetilde{X},\bbp^x)_{x\in \E}$ is a generalized semimartingale on $\E$ with characteristics $(A,B,C,\nu)$ of the form 
\begin{align} \begin{split} \label{fourchars}
  A_t(\omega)       &=\int_0^t a(\widetilde{X_s}(\omega)) \ ds, \hspace{10mm}\text{ on } \csi{0}{\zeta^\Delta}\\
  B_t^{j}(\omega) &=\int_0^t  \ell^{j}(\widetilde{X_s}(\omega)) \ ds,  \hspace{9mm} \text{ on } \rosi{0}{\zeta^\infty} \text{ for }j=1,...,d \\
  C_t^{jk}(\omega)  &=\int_0^t Q^{jk}(\widetilde{X_s}(\omega)) \ ds,       \hspace{6mm} \text{ on } \rosi{0}{\zeta^\infty} \text{ for }j,k=1,...,d\\
  \nu(\omega;ds,dy) &=N(\widetilde{X_s}(\omega),dy) \ ds \hspace{10.5mm} \text{ on } \rosi{0}{\zeta^\infty}
\end{split} \end{align}
for every $x\in E$ with respect to a fixed cut-off function $\chi$. Here, $a(x)\geq 0$, $\ell(x)=(\ell^{1}(x),...,\ell^{d}(x))'$ is a vector in $\bbr^d$, $Q(x)$ is a positive semi-definite matrix and $N$ is a Borel transition kernel such that $N(x,\{0\})=0$. We call $a$, $\ell$, $Q$ and $n:=\int_{y\neq 0} (1\wedge \norm{y}^2) \ N(\cdot,dy)$ the \emph{differential characteristics} of the process.
\end{definition}

\begin{remark} 
We call the above class `autonomous semimartingale' since each part of the dynamics is driven by the process itself. 
\end{remark}

The following proposition is easily deduced from the definitions above. 

\begin{proposition} \label{prop:seperateone}
Let $\widetilde{X}$ be an autonomous semimartingale. Then $\widetilde{X}$ can be written as $\widetilde{X}=X+K$ where $X$ is a homogeneous diffusion with jumps and explosion, that is, an autonomous semimartingale having $a=0$. The process $K=(K_t)_{t\geq 0}$ attends only the values 0 and $\Delta$ and has the fourth characteristic
\[
  A_t=\int_0^t a(X_s) \ ds.
\] 
on $\csi{0}{\zeta^\Delta}$.
\end{proposition}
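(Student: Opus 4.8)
The plan is to split $\widetilde X = X + K$ using the decomposition already built into Definition~\ref{def:gensemi}, and then verify that the ``$X$-part'' has the four characteristics claimed. Recall that an autonomous semimartingale is by definition a generalized semimartingale, so it comes with a representation $\widetilde X_t = X_t + K_t$ where $X$ is a semimartingale with explosion and $K_t = \Delta\cdot 1_{\rosi{\zeta^\Delta}{+\infty}}$. The first step is to observe that the first three characteristics $(B,C,\nu)$ of $\widetilde X$ are, by construction, defined pre-locally up to $\zeta^\infty$ and agree on each $\rosi{0}{\sigma_n\wedge n}$ with the characteristics of the classical semimartingale $X^{(\sigma_n\wedge n)-}$; hence $X$ itself carries exactly the triplet \eqref{fourchars} (with the same $\ell$, $Q$, $N$), since adding $K$ changes nothing on $\rosi{0}{\zeta^\infty}$ (indeed $K\equiv 0$ there because $\zeta^\infty \leq \zeta^\Delta$ is forced by \eqref{flowofprocess}, or more precisely $\zeta^\infty$ and $\zeta^\Delta$ have disjoint finiteness sets in the ``process with explosion'' situation). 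So $X$ is an autonomous semimartingale with $a\equiv 0$, i.e.\ a homogeneous diffusion with jumps and explosion, and by definition it takes values in $E$ on $\rosi{0}{\zeta^\infty}$, equals $\infty$ afterwards, and never equals $\Delta$ — which is the claim ``$K$ attends only the values $0$ and $\Delta$'' once we note $\widetilde X = X+K$ and $X$ never takes the value $\Delta$.

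The second step is the statement about the fourth characteristic of $K$. Here I would argue that $1_{\{\widetilde X_t = \Delta\}} = 1_{\{K_t = \Delta\}} = 1_{\rosi{\zeta^\Delta}{+\infty}}(t)$, so the fourth characteristic of $\widetilde X$ and of $K$ literally coincide: both are the predictable compensator on $\csi{0}{\zeta^\Delta}$ of the same indicator process, and by the defining property of an autonomous semimartingale this compensator is $A_t = \int_0^t a(\widetilde X_s)\,ds$. It then remains only to rewrite $\int_0^t a(\widetilde X_s)\,ds$ as $\int_0^t a(X_s)\,ds$ on $\csi{0}{\zeta^\Delta}$. This is immediate: for $s < \zeta^\Delta$ we have $K_s = 0$, hence $\widetilde X_s = X_s$, so the integrands agree $ds$-a.e.\ on $\roi{0}{\zeta^\Delta}$; and the value at the single point $s = \zeta^\Delta$ (where $\widetilde X$ may jump to $\Delta$ and $X$ may be at $\infty$) contributes zero to the integral. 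Since $a(\Delta) = a(\infty) = 0$ by the convention that functions on $E$ are extended by zero to $\E$, even a naive reading of the integrand at $\zeta^\Delta$ gives the same answer.

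The only mildly delicate point — and the one I would flag as the ``main obstacle'' — is making precise that the decomposition $\widetilde X = X + K$ extracted from Definition~\ref{def:gensemi} is compatible with the \emph{specified} characteristics \eqref{fourchars}: a priori Definition~\ref{def:gensemi} only asserts existence of \emph{some} such decomposition, and one must check it is the ``right'' one, i.e.\ that the semimartingale-with-explosion part $X$ inherits precisely $(0,B,C,\nu)$ and not some other triplet. This follows because the characteristics $(B,C,\nu)$ of a generalized semimartingale are, by the construction recalled just before the definition of the fourth characteristic, uniquely determined by $\widetilde X$ via pre-localization (they are the classical characteristics of $X^{(\sigma_n\wedge n)-}$, which depend only on $\widetilde X^{(\sigma_n\wedge n)-} = X^{(\sigma_n\wedge n)-}$ since $K$ vanishes strictly before $\zeta^\Delta \geq \zeta^\infty$); hence the $X$ in the decomposition is essentially unique up to $\zeta^\infty$ and automatically has $a=0$ because $X$ itself never reaches $\Delta$, forcing $1_{\{X_t=\Delta\}}\equiv 0$ and thus a vanishing fourth characteristic for $X$. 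Everything else is bookkeeping with the arithmetic conventions on $\E$ ($\Delta + \infty = \Delta$, $f(\Delta)=f(\infty)=0$, etc.) introduced after Definition~\ref{def:killedprocess}.
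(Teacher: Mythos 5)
Your proposal is correct and follows exactly the route the paper intends: the paper gives no written proof beyond the remark that the proposition ``is easily deduced from the definitions above,'' and your write-up is precisely that deduction — extract the decomposition $\widetilde X = X+K$ from Definition~\ref{def:gensemi}, note that $(B,C,\nu)$ are defined pre-locally through $X^{(\sigma_n\wedge n)-}$ so that $X$ inherits the triplet with $a=0$, and identify the compensator of $1_{\{\widetilde X=\Delta\}}=1_{\{K=\Delta\}}$ while replacing $a(\widetilde X_s)$ by $a(X_s)$ on $\roi{0}{\zeta^\Delta}$ (the endpoint being Lebesgue-null). Your flagged ``delicate point'' about compatibility of the decomposition with the prescribed characteristics is resolved correctly and is the only content the paper leaves implicit.
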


Subsequently we analyze the connection between certain Markov processes and autonomous semimartingales. Since the proof of the following result is involved, we have shifted it to Section \ref{sec:proofs}.

We start with Markov processes defined via the martingale problem: let $(a,\ell,Q, n)$ be as in Definition \ref{def:autsemi} and assume from now on all four components to be locally bounded. Then 
\begin{align} \begin{split}\label{generator}
  \ca u(x):= &-a(x)u(x) + \sum_{j=1}^d \ell^{j} (x) \frac{\partial u(x)}{\partial x^{j}} +\frac{1}{2} \sum_{j,k=1}^d Q^{jk}(x) \frac{\partial^2 u(x)}{\partial x^j \partial x^k} \\
	&=\int_{\bbr^d} \Big( u(x+y)-u(x)-\nabla u(x)'y \chi(y) \Big) \ N(x,dy) 
\end{split} \end{align}
defines a linear operator from $C_c^2(E)$ to $B_b(E)$, the space of bounded Borel measurable functions. 

\begin{definition}
We say that a probability measure $\bbp$ on $(\Omega, \cf^X)$ is a solution of the martingale problem for $\ca$, if for all $u\in C_c^2(E)$, 
\[
  M_t^u:=u(X_t)-u(X_0)-\int_0^t \ca u(X_s) \ ds , t\geq 0, 
\]
is a $\bbp$-martingale with respect to $(\cf_t^X)_{t\geq 0}$. We say that the martingale problem for $\ca$ is well-posed, if for every probability distribution $\eta$ on $E$ there exists a unique solution $\bbp^\eta$ of the martingale problem for $\ca$ such that $\bbp^\eta\circ X_0^{-1}=\eta$. 
\end{definition}


Every solution of the martingale problem is an autonomous semimartingale with respect to the filtration made right continuous:

\begin{theorem}\label{thm:markovsemimart}
Let $(\bbp^x)_{x\in \change{E}}$ be a family of solutions to the martingale problem for $\ca$ defined via $(a,\ell, Q, N)$ and $\chi$, such that
\[
  \bbp^x \circ X_0^{-1}=\varepsilon_x,
\]
the Dirac measure in $x$.
After separating $\partial$ into $\Delta$ and $\infty$ as above, $X=(X_t)_{t\geq 0}$ is an autonomous semimartingale on $(\Omega, \cf^X, (\cf^X_{t+})_{t\geq 0}, \bbp^x)_{x\in \change{E}}$ and the characteristics $(A,B,C,\nu)$ with respect to $\chi$ are given by \eqref{fourchars}.
\end{theorem}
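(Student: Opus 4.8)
The plan is to reduce the statement to the classical correspondence between solutions of martingale problems and semimartingales, which is already available up to the explosion time, and then to identify the fourth characteristic separately. I would proceed in three stages: first handle the ``explosion part'' of the process, i.e. the behaviour on $\rosi{0}{\zeta^\infty}$; then deal with the $\Delta$-killing and verify the fourth characteristic; and finally check that $X=X^{\text{expl}}+K$ fits Definition~\ref{def:gensemi} and Definition~\ref{def:autsemi}.

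\textbf{Step 1: the semimartingale-with-explosion part.} Fix a starting point $x\in E$ and work with $\bbp^x$. Let $\sigma_n$ and $\tau_n:=\sigma_n\wedge n$ be as in \eqref{killingtimes}. On $\rosi{0}{\tau_n}$ the process stays in the compact ball $\{\norm{y-x}\le n\}\cap E$, so one may extend the coefficients $a,\ell,Q,N$ off a large ball to globally bounded ones without changing the stopped process $X^{\tau_n-}$, and the test functions in $C_c^2$ suffice to localize. For the pre-stopped process $X^{\tau_n-}$ one then applies the classical result identifying a solution of a (bounded-coefficient) martingale problem as a semimartingale whose characteristics have the integral form \eqref{fourchars} with $a\equiv 0$ in this step --- cf.\ \cite{jacodshir} Theorem~II.2.42 / \cite{vierleute}, or the argument of \cite{symbolkillingopenset}. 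Concretely, plugging $u(y)=\langle y,\xi\rangle\chi$-localized coordinate functions and their products into $M^u$ gives that $M(h)^{\tau_n}$, $(M(h)^jM(h)^k-\widetilde C^{jk})^{\tau_n}$ and $(g*\mu^X-g*\nu)^{\tau_n}$ are local martingales, which by the characterization theorem (the analogue stated just above, restricted to the explosion part) yields that $X^{\tau_n-}$ is a classical semimartingale with the asserted first three characteristics. Since this holds for every $n$ and $\tau_n=\sigma_n\wedge n$ is exactly the announcing sequence of the predictable time $\zeta^\infty$, the process $X$ (i.e.\ $\widetilde X$ away from $\Delta$) is a semimartingale with explosion, and $B,C,\nu$ are given by \eqref{fourchars} on $\rosi{0}{\zeta^\infty}$. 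One minor point to be careful about: passing to the right-continuous filtration $(\cf^X_{t+})$ does not destroy the (local) martingale property of the $M^u$ by the usual right-continuity / closure argument, and the separating sequence $\sigma_n'$ is a stopping time for both filtrations.

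\textbf{Step 2: the fourth characteristic.} The term $-a(x)u(x)$ in \eqref{generator} is the new ingredient. Here I would argue as in Example~\ref{ex:firstexamples}(1): on $\csi{0}{\zeta^\Delta}$ the process $\widetilde X$ equals $\Delta$ exactly on $\rosi{\zeta^\Delta}{+\infty}$, so $1_{\{\widetilde X_t=\Delta\}}=1_{\{t\ge\zeta^\Delta\}}$ on that stochastic interval. We must show the compensator of $t\mapsto 1_{\{t\ge\zeta^\Delta\}}$ stopped at $\zeta^\Delta$ is $\int_0^{t}a(X_s)\,ds$. For this, test the martingale property of $M^u$ with functions $u_k\in C_c^2(E)$ that increase to $1$ on larger and larger balls: for such $u$ one has $\ca u(x)=-a(x)u(x)+(\text{terms involving derivatives of }u)$, and the derivative terms vanish on the region where $u\equiv1$; combining with $u(X_s)=0$ once $X$ has been killed, a localization/dominated-convergence argument along $\tau_n$ and $k\to\infty$ shows that $u(X_t)-u(X_0)+\int_0^t a(X_s)u(X_s)\,ds$ is a local martingale, and letting $u\uparrow1$ on $\csi{0}{\zeta^\Delta}$ turns this into $1_{\{\widetilde X_t\ne\Delta\}}-1+\int_0^{t}a(X_s)\,ds$ being a local martingale up to $\zeta^\Delta$, i.e.\ $-\big(1_{\{\widetilde X_t=\Delta\}}-\int_0^t a(X_s)\,ds\big)^{\zeta^\Delta}$ is a local martingale. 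Since $\int_0^\cdot a(X_s)\,ds$ is predictable of finite variation (using local boundedness of $a$ and continuity of the paths of $X$ on $\rosi{0}{\zeta^\infty}$), this is \emph{the} fourth characteristic, so $A_t=\int_0^t a(X_s)\,ds$ on $\csi{0}{\zeta^\Delta}$ as claimed.

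\textbf{Step 3: assembling and the Markov/autonomy bookkeeping.} Finally, $\widetilde X=X+K$ with $X$ the semimartingale-with-explosion of Step~1 and $K_t=\Delta\cdot 1_{\rosi{\zeta^\Delta}{+\infty}}$, which is exactly the structure required in Definitions~\ref{def:gensemi} and~\ref{def:autsemi}; the characteristics just computed have the form \eqref{fourchars} with the \emph{given} $a,\ell,Q,N$, because the martingale problem is stated with respect to precisely these coefficients, and their local boundedness was assumed. One should also record that this works for every starting point $x\in E$, uniformly in the sense needed for the family $(\bbp^x)_{x\in E}$, and that the separation of $\partial$ into $\infty$ and $\Delta$ described before Definition~\ref{def:killedprocess} is consistent with $\{\zeta^\infty<\infty\}\cap\{\zeta^\Delta<\infty\}=\emptyset$ and with $\zeta^\infty$ predictable with announcing sequence $\sigma_n\wedge n$, so that Steps~1--2 really do cover all of $\roi{0}{\zeta^\partial}$.

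\textbf{Main obstacle.} The delicate part is Step~2: one has to disentangle the contribution of $-a(x)u(x)$ from the (possibly large) derivative terms in $\ca u$ while $u$ is forced to be compactly supported, and one must make sure the limiting procedure $u\uparrow 1$ is compatible with the stopping at $\zeta^\Delta$ rather than at $\zeta^\infty$ --- in particular that near an explosion time the construction does not accidentally feed mass into the fourth characteristic. Handling this cleanly is exactly what forces the careful separation of $\zeta^\infty$ and $\zeta^\Delta$ via the separating sequence, and why, as the authors remark, ``doing this in the canonical way (predictable vs.\ totally inaccessible) leads nowhere''; the right dichotomy is explosion vs.\ everything else, and the proof has to exploit it at this point.
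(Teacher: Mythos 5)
Your route differs from the paper's in a way worth spelling out. The paper does \emph{not} attack the fourth characteristic by testing $M^u$ with $u_k\uparrow 1$; instead it identifies the cemetery with a genuine point $\widehat{\partial}\in\bbr^d\setminus E$ (embedding into $\bbr^{d+1}$ if necessary), so that the killed process becomes an honest $\bbr^d$-valued process $\widehat{X}$ whose killing is an ordinary finite jump. Then Proposition 3.2 of Cheridito--Filipovi\'c--Yor applies and delivers, for every stopping time $\sigma<\zeta^\partial$, the classical characteristics of $\widehat{X}^\sigma$ with third characteristic $\nu^\sigma(dt,dy)=\bigl(N(X_t,dy)+a(X_t)\,\varepsilon_{\widehat{\partial}-X_t}(dy)\bigr)dt$ (with $\chi$ chosen so that $\chi(\widehat{\partial}-x)=0$). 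The fourth characteristic is then simply the mass that the compensator of the jump measure places on the jump to $\widehat{\partial}$, peeled off a posteriori when $\widehat{\partial}$ is re-split into $\infty$ and $\Delta$. Your Step 1 is essentially consistent with this, but your Step 2 replaces the paper's one-line "read off the compensator of the jump to $\widehat{\partial}$" by a limiting argument that carries real, unresolved difficulties.

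Two concrete gaps in your Step 2. First, for $u_k\in C_c^2(E)$ with $u_k\equiv 1$ on a large ball, the derivative terms of $\ca u_k$ do vanish there, but the nonlocal term does not: at a point $x$ where $u_k\equiv 1$ locally one still has
\begin{align*}
\int_{\bbr^d}\bigl(u_k(x+y)-u_k(x)-\nabla u_k(x)'y\chi(y)\bigr)N(x,dy)=\int_{\{|y|\ \text{large}\}}\bigl(u_k(x+y)-1\bigr)N(x,dy),
\end{align*}
which is nonzero whenever $N(x,\cdot)$ charges jumps reaching beyond the support of $u_k$. You need a uniform (over the compact set visited on $\rosi{0}{\tau_n}$) tail estimate on $N(x,\cdot)$ to kill this term as $k\to\infty$; local boundedness of $n(x)=\int(1\wedge\norm{y}^2)N(x,dy)$ gives a uniform bound but not uniform tail decay, so an explicit interchange-of-limits argument is required and is missing. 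Second, the pointwise limit of $u_k(X_t)$ is $1_{\{t<\zeta^\partial\}}$, not $1_{\{t<\zeta^\Delta\}}$: on explosion paths $u_k(X_t)\to 0$ as $t\uparrow\zeta^\infty$ for each fixed $k$, so the limit object is $1_{\{\widetilde X_t=\Delta\}}+1_{\{\widetilde X_t=\infty\}}$ minus the integral term, and you must show that the explosion contribution is exactly absorbed by the localization along $\tau_n$ before stopping at $\zeta^\Delta$ --- you name this as the "main obstacle" but do not carry it out, and it is precisely the step the paper's $\widehat{\partial}$-construction is designed to avoid (there, explosion versus sudden killing is separated only \emph{after} the classical characteristics of $\widehat{X}^{\sigma_n}$ have been obtained, so no interchange of $k\to\infty$ with $n\to\infty$ ever arises). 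As written, your proposal is a plausible alternative strategy but not yet a proof of the identification $A_t=\int_0^t a(X_s)\,ds$.
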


The following result shows, that the above theorem encompasses \cite{levymatters3} Theorem 2.44. Compare in this context Hoh \cite{hoh95}. Dealing with Feller processes, there are different conventions in the literature: here, we consider $T_tu(x):=\bbe^x u(X_t) $ $(t\geq 0)$ as semigroup on $(C_0(\bbr^d),\norm{\cdot}_\infty)$, the space of continuous functions vanishing at infinity. 
For every Markov process in the sense of Blumenthal an Getoor, $T_t$ is for every $t\geq 0$ a contractive, positivity preserving
and sub-Markovian operator. If in addition \newline
\hspace*{1cm}(F1) $T_t:C_0(\bbr^d) \pf C_0(\bbr^d)$ for every $t\geq
0$ and \newline \hspace*{1cm}(F2) $\lim_{t\downarrow 0}
\norm{T_tu-u}_\infty =0$ for every $u\in C_0(\bbr^d)$ \newline we
call the semigroup and the associated process $X=(X_t)_{t\geq 0}$
\emph{Feller}.
The generator $(\ca,D(\ca))$ is the closed operator given by
\begin{align} \label{generatordef}
  \ca u(x):=\lim_{t \downarrow 0} \frac{T_t u(x) -u(x)}{t} \hspace{1cm}  (u\in D(\ca))
\end{align}
where $D(\ca)\subseteq C_0(\bbr^d)$ is the set on which the
limit \eqref{generatordef} exists in strong sense, that is,  uniformly in
$x\in\bbr^d$.

\begin{corollary}\label{cor:fellersemimart}
Let $X$ be a Feller process on $\bbr^d$ with  c\`adl\`ag paths. Furthermore, let $C_c^\infty(\bbr^d) \subseteq D(\ca)$. 
After separating $\partial$ into $\Delta$ and $\infty$ as above, $X$ is an autonomous semimartingale on $(\Omega, \cf^X, (\cf^X_{t+})_{t\geq 0}, \bbp^x)_{x\in \bbr^d}$.
\end{corollary}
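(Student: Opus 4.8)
The plan is to deduce Corollary~\ref{cor:fellersemimart} from Theorem~\ref{thm:markovsemimart} by showing that a Feller process with $C_c^\infty(\bbr^d)\subseteq D(\ca)$ falls, after the separation of $\partial$, into the class covered by that theorem. The bridge is the classical Courr\`ege--von Waldenfels representation: since $C_c^\infty(\bbr^d)\subseteq D(\ca)$ and $\ca$ satisfies the positive maximum principle, \eqref{pseudo} and \eqref{lkfax} hold, so on $C_c^\infty(\bbr^d)$ the generator has exactly the form \eqref{generator} with differential characteristics $(a,\ell,Q,N)$ that are locally bounded (local boundedness of $x\mapsto q(x,\xi)$ is part of the Courr\`ege theorem, and this transfers to the individual components $a,\ell,Q$ and to $n=\int(1\wedge\|y\|^2)\,N(\cdot,dy)$ by testing against suitable fixed functions, e.g. by a standard argument evaluating $q(x,\cdot)$ on finitely many frequencies). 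So the first step is simply to record that the Feller generator, restricted to $C_c^\infty$, is an operator of the type feeding Theorem~\ref{thm:markovsemimart}, noting that $C_c^\infty(\bbr^d)\subseteq C_c^2(E)$ here since $E=\bbr^d$.

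The second step is to check that the family $(\bbp^x)_{x\in\bbr^d}$ of Feller laws solves the martingale problem for this $\ca$. This is standard: for $u\in D(\ca)$ the process $u(X_t)-u(X_0)-\int_0^t\ca u(X_s)\,ds$ is a $\bbp^x$-martingale with respect to $(\cf_{t+}^X)_{t\ge0}$ by the Markov property and Dynkin's formula, and in particular this holds for $u\in C_c^\infty(\bbr^d)$, which is a core-sized test class. One should also note $\bbp^x\circ X_0^{-1}=\varepsilon_x$ by normality of the Feller process. Here one must be slightly careful about the convention: the Feller process a priori lives on $E_\partial=\bbr^d\cup\{\partial\}$, so before invoking Theorem~\ref{thm:markovsemimart} one performs the separation of $\partial$ into $\infty$ and $\Delta$ exactly as described before Definition~\ref{def:killedprocess}, using the separating sequence $(\sigma_n')_{n}$ and the prescriptions \eqref{killingtimes}. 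After this, all hypotheses of Theorem~\ref{thm:markovsemimart} are met, and it yields that $X$ is an autonomous semimartingale on $(\Omega,\cf^X,(\cf^X_{t+})_{t\ge0},\bbp^x)_{x\in\bbr^d}$ with characteristics $(A,B,C,\nu)$ of the form \eqref{fourchars}.

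The step I expect to be the main obstacle is the regularity/local-boundedness bookkeeping needed to be sure that the hypotheses of Theorem~\ref{thm:markovsemimart} are literally satisfied, rather than the martingale-problem identification, which is routine. Concretely: the Courr\`ege theorem gives that $q$ is locally bounded in $x$ and continuous negative definite in $\xi$, but Definition~\ref{def:autsemi} and the setup preceding Theorem~\ref{thm:markovsemimart} ask for the four differential characteristics $a,\ell,Q,n$ individually to be locally bounded and for $N$ to be a genuine Borel transition kernel with $N(x,\{0\})=0$. Extracting $a$, the drift $\ell$ (which depends on the chosen cut-off $\chi$), the diffusion matrix $Q$, and the truncated second moment $n$ from $q(x,\cdot)$ in a measurable, locally bounded way is the technical heart; this is exactly the content of the dissection in \cite{courrege} and is also carried out in \cite{niels1} Section~4.5, so one cites those. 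Once that is in place, one invokes Theorem~\ref{thm:markovsemimart} and reads off the conclusion; one may additionally remark that since a Feller process cannot be separated from $\bbr^d$ at a totally inaccessible time in a way incompatible with the construction, the $\Delta$-part $K$ and the explosion time $\zeta^\infty$ are the ones produced by \eqref{killingtimes}, so the identification is complete.
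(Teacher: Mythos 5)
Your route is the same as the paper's: establish the Courr\`ege form \eqref{generator} of the generator on test functions, verify that the Feller laws solve the martingale problem, check local boundedness of the differential characteristics, and then invoke Theorem~\ref{thm:markovsemimart}. The paper cites its own earlier work (\cite{symbolkillingopenset}, Theorem 3.7 and Lemma 3.3) where you cite \cite{courrege} and \cite{niels1}, but the content is identical, and your identification of the dissection of $q(x,\cdot)$ into locally bounded components as the technical heart matches the paper's emphasis.

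One slip to fix: the martingale problem, as defined in the paper and as used as the hypothesis of Theorem~\ref{thm:markovsemimart}, requires $M^u$ to be a martingale for \emph{all} $u\in C_c^2(E)$, whereas Dynkin's formula only gives this for $u\in D(\ca)\supseteq C_c^\infty(\bbr^d)$. Your remark that $C_c^\infty(\bbr^d)\subseteq C_c^2(E)$ points in the wrong direction for this purpose: verifying the martingale property on the smaller class does not by itself verify it on the larger one. The paper closes this by observing that $C_c^\infty(\bbr^d)\subseteq D(\ca)$ in fact forces $C_c^2(\bbr^d)\subseteq D(\ca)$ (with $\ca$ still given by \eqref{generator} there); alternatively you can approximate $u\in C_c^2$ by $C_c^\infty$ functions in the local $C^2$ topology and pass to the limit in $M^{u_n}$ using the local boundedness of $(a,\ell,Q,n)$. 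Either way the gap is routine to fill, but as written your second step does not literally meet the hypothesis of Theorem~\ref{thm:markovsemimart}.
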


\begin{proof}
Let $(\ca, D(\ca))$ denote the generator of the Feller process $X$. From the assumption $C_c^\infty(\bbr^d) \subseteq D(\ca)$ it is easily deduced that $C_c^2(\bbr^d)\subseteq D(\ca)$ and that $\ca$ is given by \eqref{generator} on this function space. This result seems to be some kind of folklore anyway, but it can be deduced rigorously as a straight-forward extension of \cite{symbolkillingopenset} Theorem 3.7 and its corollary. By \cite{revuzyor} Proposition VII.1.6 the process $M^u$ is a martingale for every $u\in C_c^2(\bbr^d)$. The local boundedness of the differential characteristic is always fulfilled for rich Feller processes by Lemma 3.3 of \cite{symbolkillingopenset}. In the proof of that result only properties of negative definite functions have been used. These remain true, if a fourth component is considered. Hence, the result follows from our theorem above. 
\end{proof}

We have to impose two assumptions on the differential characteristics $(a,\ell,Q,n)$ in order to derive our second main result. These assumptions are very weak and they are satisfied by virtually every example in the literature. The first assumption is the local boundedness of the differential characteristics, the second assumption reads as follows: 

\begin{definition} \label{def:finelycont}Let $X$ be a generalized semimartingale and $f:\E\to\bbr$ be a Borel-measurable function. $f$ is called $X$\emph{-finely continuous} (or \emph{finely continuous}, for short) if the function
\begin{align} \label{rightcont}
  t\mapsto f(X_t)=f\circ X_t
\end{align}
is right continuous at zero $\bbp$-a.s.
\end{definition}

\begin{remark}
(a) The points $\Delta$ and $\infty$ do not have to be considered in this definition. The process starting in $\Delta$ has to be constant and $\infty$ can be left only by jumping to $\Delta$. 
(b) Fine continuity is introduced differently in the Markovian framework (see the monographs \cite{blumenthalget} Section II.4 and \cite{fuglede}). By Theorem 4.8 of \cite{blumenthalget} the classical definition is equivalent to \eqref{rightcont} in the Markovian setting.
(c) If the differential characteristics are continuous, the condition stated in Definition \ref{def:finelycont} is obviously fulfilled, since the paths of $X$ are c\`adl\`ag.
\end{remark}


Now we are ready to introduce the `symbol' of a stochastic process in this general framework. The symbol offers a neat way to calculate the (extended) generator (if the process is Markovian) and the semimartingale characteristics. Since the symbol contains the same information as the characteristics it has been used (in the conservative case) to analyze e.g. the Hausdorff dimension of paths \cite{schilling98hdd}, their strong variation \cite{withMartynas}, H\"older conditions \cite{schilling98}, ultracontractivity of semigroups \cite{schillingwang}, laws of iterated logarithm \cite{schillingknopova} and stationary distributions of Markov processes \cite{behmeschnurr}. 

Dealing with the symbol, we could work on $E$ with its relative topology. We make things a bit easier by prolonging the process to $\bbr^d$ by setting $X_t:=x$ for $x\in \bbr^d \backslash E$ and $t\geq 0$. Hence, from now on we assume that our processes live on $\bbr^d$ respectively on $\rd=\bbr^d\cup \{\infty,\Delta\}$. Starting with a process on $\E$ local boundedness and fine continuity of the differential charcteristic are not harmed by this extension. 
Furthermore, we write for $\xi\in\bbr^d$
\[
  e_\xi(x):=\begin{cases}e^{i x'\xi}& \text{if } x\in \bbr^d \\ 0 &\text{if } x\in\{ \infty, \Delta \}. \end{cases}
\]

\begin{definition} \label{def:symbol}
    Let $X$ be an $\rd$-valued semimartingale, with respect to $\bbp^x$ for every $x\in\bbr^d$. Fix a starting point $x\in\bbr^d$ and let $K\subseteq \bbr^d$ be a compact neighborhood of $x$. Define $\sigma$ to be the first exit time of $X$ from $K$:
    \begin{gather} \label{stopping}
        \sigma:=\sigma^x_K:=\inf\big\{t\geq 0 : X_t\in \rd \backslash K  \big\}.
    \end{gather}
    The function $p:\bbr^d\times \bbr^d \rightarrow \bbc$ given by
    \begin{gather} \label{stoppedsymbol}
         p(x,\xi):= -\lim_{t\downarrow 0}  \frac{\bbe^x\Big(e_\xi(X_t^\sigma-x)  - 1\Big)}{t}   
    \end{gather}
    is called the \emph{(probabilistic) symbol of the process}, if the limit exists for every $x\in \bbr^d$, $\xi\in\bbr^d$ independently of the choice of $K$.
\end{definition}

If we need the symbol on $\rd$, it is defined as follows: in $\Delta$ it is zero and in $\infty$ it is the local killing rate (starting in $\infty$).

Due to the definition of the symbol the following simple facts hold which we will use several times subsequently
\begin{align} \label{stoppingtimeorder}
  \sigma \leq \zeta^\infty \hspace{15mm} \text{ and } \hspace{15mm} \sigma \leq \zeta^\Delta
\end{align}
where the first inequality is always strict on $\{\zeta^\infty < +\infty\}$. . 
 
Finally we calculate the symbols of the processes under consideration. This concept turns out to be the space-dependent analog of the L\'evy exponent \eqref{lkfa}. \change{Compare in this context \cite{levymatters3} Section 2.5}.

\begin{theorem} \label{thm:stoppedsymbol}
Let $X$ be an autonomous semimartingale on $\widetilde{\bbr^d}$ such that the differential characteristics $a$, $\ell$, $Q$ and $n$ are locally bounded and finely continuous for every $\bbp^x$ $(x\in \bbr^d)$.
In this case the limit \eqref{stoppedsymbol} exists and the symbol of $X$ is 
\begin{align}\label{symbol}
  p(x,\xi)=a(x) -i\ell(x)'\xi + \frac{1}{2} \xi'Q(x) \xi -\int_{y\neq 0} \Big(e^{iy'\xi}-1 -iy'\xi\cdot\chi(y)\Big) \ N(x,dy).
\end{align}
\end{theorem}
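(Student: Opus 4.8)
The plan is to compute the limit in \eqref{stoppedsymbol} by splitting the expectation according to where the stopped process sits: either still inside the compact neighborhood $K$, or at one of the two killing points $\infty$, $\Delta$. Because $\sigma\leq\zeta^\infty\wedge\zeta^\Delta$ by \eqref{stoppingtimeorder} and the first inequality is strict on $\{\zeta^\infty<+\infty\}$, the process $X^\sigma$ can only reach $\Delta$ (never $\infty$) before time $\sigma$, and it does so exactly on $\{\zeta^\Delta\leq t\}\cap\{\zeta^\Delta=\zeta^\Delta_{\text{pred}}\}$-type events. Concretely, I would write
\begin{align*}
  \frac{\bbe^x\big(e_\xi(X_t^\sigma-x)-1\big)}{t}
  &= \frac{\bbe^x\big((e_\xi(X_t^\sigma-x)-1)\,1_{\{X_t^\sigma\in\bbr^d\}}\big)}{t}
   - \frac{\bbp^x(X_t^\sigma=\Delta)}{t},
\end{align*}
using $e_\xi(\Delta)=0$ and that $X_t^\sigma\neq\infty$ for small $t$ (on $\{\zeta^\infty<\infty\}$ we have $\sigma<\zeta^\infty$ strictly, so before $\sigma$ the process is in $E\subseteq\bbr^d$). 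The second term tends to $a(x)$: this is essentially the defining property of the fourth characteristic together with the autonomy assumption $A_t=\int_0^t a(\widetilde X_s)\,ds$ on $\csi{0}{\zeta^\Delta}$ and the fine continuity of $a$, which forces $s\mapsto a(X_s)$ to be right-continuous at $0$, so that $A_t/t\to a(x)$; the compensator relation for $1_{\{\widetilde X=\Delta\}}^{\zeta^\Delta}$ then transfers this to $\bbp^x(X_t^\sigma=\Delta)/t$. Here one uses $\sigma\leq\zeta^\Delta$ so that stopping at $\sigma$ does not destroy the relation before the killing time, plus optional stopping to replace $A_{t\wedge\sigma}$ by $A_t$ in the limit (the discrepancy is $o(t)$ since $\sigma>0$ a.s.).

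For the first term — the genuinely $\bbr^d$-valued part — I would invoke exactly the conservative-case argument, i.e. the proof of the three-characteristics version of this symbol formula (as in \cite{levymatters3} Section 2.5 / \cite{symbolkillingopenset}), applied to the semimartingale-with-explosion part $X$ pre-stopped at $\sigma$. Since $\sigma\leq\zeta^\infty$ and on $\{\sigma<t\}$ the process has already left $K$, while on $\{\sigma\geq t\}$ it has stayed in $K\subseteq\bbr^d$, the stopped process $X^\sigma$ restricted to the event $\{X_t^\sigma\in\bbr^d\}$ is a classical $\bbr^d$-valued semimartingale with characteristics $(B,C,\nu)$ given by \eqref{fourchars} up to $\sigma$. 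Itô's formula applied to $e_\xi$ (a bounded $C^2$ function), followed by taking expectations, dividing by $t$ and letting $t\downarrow0$, produces
\[
  -i\ell(x)'\xi+\tfrac12\xi'Q(x)\xi-\int_{y\neq0}\big(e^{iy'\xi}-1-iy'\xi\,\chi(y)\big)\,N(x,dy),
\]
where the fine continuity and local boundedness of $\ell,Q,n$ are used to pass the limit through the time-integrals $\frac1t\int_0^{t\wedge\sigma}\ell(X_s)\,ds\to\ell(x)$ etc., and the local boundedness controls the martingale and small-jump remainder terms so they vanish in the limit. The independence of the limit from the choice of $K$ is automatic because for any two neighborhoods the exit times agree for all sufficiently small $t$ on a full-measure set, so only the germ of the path at $0$ matters.

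The main obstacle I anticipate is bookkeeping the interaction of the two stopping times $\sigma$, $\zeta^\Delta$ (and $\zeta^\infty$) cleanly enough that the $\Delta$-contribution really isolates as $a(x)$ without leakage into or from the $\bbr^d$-part — in particular justifying that $\bbp^x(X_t^\sigma=\Delta)/t$ has the same limit as $A_t/t=A_{t\wedge\zeta^\Delta}/t$, which requires knowing the local martingale $M=(1_{\{\widetilde X=\Delta\}}-A)^{\zeta^\Delta}$ stopped further at $\sigma$ is still well-behaved and that $\bbe^x M_{t\wedge\sigma}=0$ (a uniform-integrability / localization check, easy since $1_{\{\widetilde X=\Delta\}}$ is bounded and $A$ is locally bounded by local boundedness of $a$). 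A secondary technical point is verifying that the contribution of $\{X_t^\sigma\notin\bbr^d\}$ to the $\bbr^d$-part estimate is negligible — but this follows since that event has probability $O(t)$ and the integrand $e_\xi(X_t^\sigma-x)-1$ is bounded, contributing $O(t)\cdot O(1)/t$ which must be matched against the $\Delta$-term and is in fact already accounted for in it. Once these measure-theoretic matchings are in place, the rest is the standard computation.
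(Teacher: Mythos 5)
Your overall decomposition is the same as the paper's: split the expectation over the event where the process has been killed (where $e_\xi$ vanishes, producing $-\bbp^x(X_t^\sigma=\Delta)/t$), identify that term with $-\frac1t\bbe^x A_t^\sigma=-\bbe^x\int_0^1 a(\widetilde X^\sigma_{ts})\,ds\to -a(x)$ via the compensator property and fine continuity, and recover the remaining three terms of the symbol from the conservative-case result. That part of your argument matches the paper.

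However, there is a genuine gap at exactly the point you label a "secondary technical point". Writing $\widetilde X=X+K$ with $X$ the (conservative up to explosion) part, the conservative-case theorem gives the limit of $\frac1t\bigl(\bbe^x e_\xi(X_t^\sigma-x)-1\bigr)$ with the expectation taken over all of $\Omega$, whereas your first term is the expectation restricted to $\{K_t^\sigma\neq\Delta\}$. The discrepancy is the cross term
\[
  \frac1t\,\bbe^x\Bigl(1_{\{K_t^\sigma=\Delta\}}\bigl(e_\xi(X_t^\sigma-x)-1\bigr)\Bigr),
\]
which you estimate as "$O(t)\cdot O(1)/t$" and declare negligible. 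That bound only yields $O(1)$, not $o(1)$: a product of an event of probability $O(t)$ with a bounded integrand need not be $o(t)$. Indeed the paper closes Section 4 with a counterexample ($X_t=\sqrt t$ with a suitably chosen killing process) in which precisely this term fails to converge, so no soft argument can work; one must use the joint structure of $X^\sigma$ and $K^\sigma$ guaranteed by the autonomy assumptions. In the paper this is the content of a dedicated lemma, proved by applying It\^o's formula to the bivariate semimartingale $(X_t^\sigma,1_{\{K_t^\sigma=\Delta\}})'$ and the function $(y,z)\mapsto(u(y)-u(x))z$, then showing term by term (martingale parts, drift and jump parts controlled by the locally bounded, finely continuous differential characteristics, and the compensator $\int_0^\cdot a(X_s)\,ds$ for the jump of the indicator) that everything vanishes after dividing by $t$ and letting $t\downarrow0$. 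This lemma is the technical heart of the proof and is missing from your proposal; without it, your argument establishes the $a(x)$ contribution and the conservative part separately but cannot glue them together.
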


\begin{remarks} (a) The symbol allows to calculate the (extended) generator of Markov processes in a neat way using formula \eqref{pseudo}. Even in the non-Markovian case, having calculated the symbol, one can write down the semimartingale characteristics by \eqref{fourchars}. This is nice in particular if the process is given as a solution of an SDE (cf. \cite{sdesymbol}). 

(b) Results on conservativeness of Feller processes or L\'evy-type processes can now be used in full generality (cf. Hoh \cite{walterhabil} Chapter 9, B\"ottcher et al. \cite{levymatters3} Theorem 2.33 and Schilling \cite{schilling98pos} Section 5).

(c) The symbol of an autonomous semimartingale is a state-space dependent continuous negative definite function.
This is natural since the symbol describes the local dynamics of the process. In contrast to L\'evy processes these dynamics depend on the current position in space. 

(d) Let us emphasize that the assumptions of the theorem are very weak. If e.g. the differential characteristics are continuous, they are directly fulfilled.

(e) We believe that various results that have been proved and shown to be useful for classical semimartingales, characteristics and symbols can be transferred to our more general setting.  
\end{remarks}

\section{Complementary Results and Examples}

In order to understand the intuition behind the fourth characteristic, the following example is helpful.

\begin{example}
`just killing'  Consider a time homogeneous Markov process $(X, \bbp^x)_{x\in\bbr}$ with killing which only attends the values $x$ and $\Delta$. Denote the local killing rate by
\begin{align} \label{killingrate}
  \lambda^x:=\lim_{h\downarrow 0} \frac{\bbp^x(X_h = \Delta)}{h}.
\end{align}
It is easy to see that in this case 
\[
  p(x,\xi)=a(x)=\lambda^x
\]
holds.
\end{example}

\begin{example}
`L\'evy process' Let $\widetilde{Z}=Z+K$ be a L\'evy process with killing as considered in the Introduction. We get
\begin{align*}
  -\frac{\bbe^0 e_\xi(X_t^\sigma+K_t) - 1 }{t} 
	   &= -\int_{\{K_t=\Delta\} } \frac{e_\xi(\Delta) -1}{t} d\bbp^0 -\int_{\{K_t\neq \Delta\}} \frac{e^{iX_t'\xi} -1}{t} \ d\bbp^0 \\
     &= \frac{\bbp^0(K_t=\Delta)}{t} + \int 1_{\{K_t\neq\Delta\}} d\bbp^0 \int -\frac{e^{iX_t'\xi} -1}{t} \ d\bbp^0.
\end{align*}
Here, we have used the facts that L\'evy processes are homogeneous in space, stopping does not harm the killing process and $X$ and $K$ are independent in the L\'evy case. 
The last expression tends to the sum of the local killing rate $a$ and the classical L\'evy exponent $\psi$ of $Z$, since $\bbp^0(K_t\neq \Delta)$ tends to one for $t\downarrow 0$. Hence, the symbol of a L\'evy process with killing is its characteristic exponent $\phi$ (cf. \eqref{lkfa}). 
\end{example}
\change{The following results encompases \cite{symbolkillingopenset} Theorem 4.3. It is easily deduced combining Theorem \ref{thm:markovsemimart}, Corollary \ref{cor:fellersemimart} respectively its proof and Theorem \ref{thm:stoppedsymbol}: }

\begin{theorem} 
\change{Let $X$ be a Feller process on $\bbr^d$ with  c\`adl\`ag paths. Furthermore, let $C_c^\infty(\bbr^d) \subseteq D(\ca)$. 
After separating $\partial$ into $\Delta$ and $\infty$ as in Section 2, $X$ is an autonomous semimartingale on $(\Omega, \cf^X, (\cf^X_{t+})_{t\geq 0}, \bbp^x)_{x\in \bbr^d}$. If the differential characteristics are finely continuous, the functional analytic symbol $q(x,\xi)$ and the probabilistic symbol $p(x,\xi)$ coincide for this process.}
\end{theorem}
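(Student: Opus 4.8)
The plan is to chain together the three results the preceding remark points to, checking only that the coefficient data handed from one statement to the next is literally the same quadruple at every junction. First I would invoke Corollary \ref{cor:fellersemimart} together with the argument in its proof: from $C_c^\infty(\bbr^d)\subseteq D(\ca)$ one obtains $C_c^2(\bbr^d)\subseteq D(\ca)$ and, via the Courr\`ege-type representation \eqref{pseudo}--\eqref{lkfax}, that on this space $\ca$ coincides with the integro-differential operator \eqref{generator} whose coefficients are precisely the coefficients $(a,\ell,Q,N)$ of the functional analytic symbol $q(x,\xi)$. For rich Feller processes these are automatically locally bounded (Lemma 3.3 of \cite{symbolkillingopenset}; only properties of continuous negative definite functions enter, so the extra term $a$ is harmless). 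Since $M^u$ is then a $\bbp^x$-martingale for every $u\in C_c^2(\bbr^d)$ by \cite{revuzyor} Proposition VII.1.6, the family $(\bbp^x)_{x\in\bbr^d}$ solves the martingale problem for this $\ca$, and Theorem \ref{thm:markovsemimart} applies: after separating $\partial$ into $\Delta$ and $\infty$, $X$ is an autonomous semimartingale on $(\Omega,\cf^X,(\cf^X_{t+})_{t\geq 0},\bbp^x)_{x\in\bbr^d}$ with four characteristics given by \eqref{fourchars} and differential characteristics exactly the $a,\ell,Q,n$ read off from $q$. This already establishes the first assertion.

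Next, under the additional hypothesis that the differential characteristics are finely continuous (local boundedness having been secured in the previous step), Theorem \ref{thm:stoppedsymbol} applies to this autonomous semimartingale: the limit \eqref{stoppedsymbol} exists and the probabilistic symbol is given by \eqref{symbol}, again with the same $(a,\ell,Q,N)$. Comparing \eqref{symbol} with the defining formula \eqref{lkfax} for $q$, both equal $a(x)-i\ell(x)'\xi+\tfrac12\xi'Q(x)\xi-\int_{y\neq 0}\big(e^{iy'\xi}-1-iy'\xi\,\chi(y)\big)\,N(x,dy)$, whence $p(x,\xi)=q(x,\xi)$ for all $x,\xi\in\bbr^d$; on $\rd$ the remaining cases are the trivial ones, $p=q=0$ at $\Delta$ and $p(\infty,\cdot)$ the local killing rate by the convention fixed just after Definition \ref{def:symbol}.

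The only real difficulty is bookkeeping rather than analysis: one must make sure that ``the differential characteristics'' in Theorem \ref{thm:stoppedsymbol}, the coefficients in the conclusion of Theorem \ref{thm:markovsemimart}, and the coefficients in the Courr\`ege representation of $\ca$ are one and the same $(a,\ell,Q,N)$ --- i.e. that no implicit change of truncation function $\chi$ or of the L\'evy kernel has crept in between the generator and the characteristics. This is exactly what the proof of Corollary \ref{cor:fellersemimart} and the statement of Theorem \ref{thm:markovsemimart} guarantee, so once those identifications are spelled out there is nothing further to do. A secondary point worth one sentence is that the probabilistic symbol is computed from the stopped process $X^\sigma$ with $\sigma\leq\zeta^\infty\wedge\zeta^\Delta$ by \eqref{stoppingtimeorder}, so the separation of $\partial$ into $\Delta$ and $\infty$ does not interfere with the application of Theorem \ref{thm:stoppedsymbol}.
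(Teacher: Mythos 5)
Your proposal is correct and follows exactly the route the paper itself indicates: it deduces the theorem by chaining Corollary \ref{cor:fellersemimart} (and its proof, which supplies the Courr\`ege representation of $\ca$, the local boundedness of the coefficients, and the martingale property of $M^u$), Theorem \ref{thm:markovsemimart}, and Theorem \ref{thm:stoppedsymbol}, with the essential bookkeeping observation that the quadruple $(a,\ell,Q,N)$ appearing in $q$, in the generator, in the characteristics \eqref{fourchars}, and in the probabilistic symbol \eqref{symbol} is one and the same with respect to the fixed cut-off function $\chi$. No further comment is needed.
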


\begin{example} `Superdrift' \label{ex:superdrift}
Let us consider the deterministic Markov process given by 
\[
X_t=\begin{cases} \frac{1}{\frac{1}{x} -t}& \text{if } t\in
[0,1/x[ \\ \infty &\text{else} \end{cases}
\]
under $\bbp^x$ for $x\geq 0$. For $x<0$ we set $X_t=x$ for $t\geq 0$. This process is an autonomous semimartingale and even a Feller process. 
The symbol of this process is $p(x,\xi)= -ix^2\xi$ and therefore its first characteristic is
$B_t(\omega)=\int_0^t X_s^2 \, ds$. Although the process leaves $\bbr$ the fourth characteristic is zero. 
\end{example}

\begin{example} `CIR with jumps and killing'
The following one-dimensional process $Z$ considered in Cherdito et al. \cite{che-fil-yor} Section 6 is a semimartingale in our extended framework: Let $(W_t)_{t\geq 0}$ be a standard Brownian motion, $(N_t)_{t\geq 0}$ a compound process with jump arrival rate $\lambda >0$ and positive jumps. Let the jumps be given by a probability measure $m$ on $]0,\infty[$. Furthermore, let $\tau$ be an exponentially distributed random time with mean $1/ \gamma >0$. Let $\sigma >0$, $b_0\geq \sigma^2/2$ and $b_1\in \bbr$. Let $Y=(Y_t)_{t\geq 0}$ be given on $]0,\infty[$ as the unique strong solution of 
\begin{align*}
  dY_t&= (b_0+b_1 Y_t) \ dt + \sigma \sqrt{Y_t} \ dW_t + dN_t \\
  Y_0&=y,\quad y>0,
\end{align*}
and $Y_t:=y$ identically on $]-\infty, 0]$. The process $Z$ is then given by
\[
  Z:=Y1_{\rosi{0}{\tau}}+\Delta1_{\rosi{\tau}{+\infty}}
\]
The fourth characteristic of this process is $\lambda t$ on $\csi{0}{\tau}$ and the symbol of $Z$ on $]0,\infty[$ is
\[
  p(x,\xi)=\lambda-i(b_0+b_1x)\xi+\frac{1}{2} \sigma^2 x \xi^2- (\varphi_m(\xi)-1)
\]
where $\varphi_m$ denotes the characteristic function of the measure $m$. The example in  \cite{che-fil}  as well as the affine processes in \cite{che-fil-kim} could be treated in the same way.
\end{example}

\begin{example} `dangerous areas'
Let $Y$ be a given Markov semimartingale without killing. Now we add an additional component: Let $a\in C_b(\bbr^d)$ and define $X$ to be equal to $Y$ with the following exception:
\[
  \bbp^x(X_t\in \Delta)=\bbe^x\int_0^t a(X_s) \change{e^{-a(X_s)s}} \ ds
\]
In this case $a$ is the fourth differential characteristic of $X$. Processes $Y$ used in mathematical finance could be modified in this way. Interpretation: if a company is going through rough times (high value of $a$) for a long time, it is more likely that a sudden bankrupt occurs. The COGARCH process (cf. \cite{cogarch}, \cite{cogarch2}, \cite{cogarchsymbol} ) might be a candidate for such a modification. 

If $Y$ is given as a solution of a L\'evy driven SDE with coefficient $\Phi: \bbr^d \to \bbr^{d \times m}$, the symbol of the modified process $X$ is
\[
p(x,\xi)=a(x)+\psi(\Phi(x)'\xi).
\]
where $\psi$ is the L\'evy exponent of the $m$-dimensional driver (cf. \cite{sdesymbol}, \cite{withMartynas}).
\end{example}

\section{Proofs of the Main Results} \label{sec:proofs}

\begin{proof}[Proof of Theorem \ref{thm:markovsemimart}]
Let $x\in E$ and $\bbp^x$ be a solution to the martingale problem for $\ca$ with $\eta=\varepsilon_x$, the Dirac measure in $x$. 
In the first step we work along the lines of \cite{che-fil-yor} Section 3: we identify $\partial$ with a point $\widehat{\partial}$ in $\bbr^d \backslash \change{E}$. Such a point exists without loss of generality since otherwise we can extend $\ell$, $Q$, $N$ and $\chi$ in a straightforward way to $\bbr^{d+1}$. 

The modified process 
\[
  \widehat{X}:= X1_{\rosi{0}{\zeta^{\partial}}}+\widehat{\partial}1_{\rosi{\zeta^{\partial}}{+\infty}}
\]
is $(\cf_t^X)$-adapted and has right-continuous paths in $\bbr^d$. Nevertheless, $\norm{\widehat{X}_{\zeta^{\partial}-}}=+\infty$ is still possible, that is, the modified process might explode. 

Now let $\sigma$ be an arbitrary $(\cf_t^X)$-stopping time such that $\sigma < \zeta^\partial$. In this case
\[
  \bigcup_{n\geq 1} \{\sigma < \sigma_n\} = \Omega. 
\]
Therefore the local boundedness of $(a,\ell,Q,n)$ implies that the following $(\cf_t^X)$-predictable processes and random measures are well defined for every $\omega\in\Omega$: 
\begin{align*}
  B_t^\sigma&:=\int_0^{\sigma \wedge t} \ell(X_s) \ ds \\
	C_t^\sigma&:=\int_0^{\sigma \wedge t} Q(X_s) \ ds \\
	\nu^\sigma(dt,dy) &:=\Big(N(X_t, dy) + a(X_t) \varepsilon_{\widehat{\partial}-X_t}(dy) \Big) \ dt
\end{align*}
In order to simplify the expression for $B_t^\sigma$ the truncation function is chosen in a way that $\chi(\widehat{\partial}-x)=0$ for all $x\in E$. 

By Proposition 3.2 of \cite{che-fil-yor} for every $(\cf_t^X)$-stopping time $\sigma<\zeta^\infty$ we obtain that $\widehat{X}^\sigma$ is a classical semimartingale on $(\Omega, \cf, \cf_{t+}^X, \bbp^x)$ with characteristics $(B^\sigma, C^\sigma, \nu^\sigma)$ with respect to $\chi$. Here, $\zeta^\infty$ denotes the explosion time, cf. \eqref{killingtimes}. 

Now we proceed as described in Section 2. Let $\sigma$ specifically be the $\sigma_n$ announcing $\zeta^\infty$ ($n\in\bbn$). We set $\infty:=\widehat{\partial}$ for those $\omega$ with $\sigma_n(\omega) < \zeta^\partial(\omega)$ for all $n\in\bbn$. Letting $n\to\infty$ we obtain that $\widehat{X}$ is a semimartingale with explosion having the characteristics $(B^{\sigma_n}, C^{\sigma_n}, \nu^{\sigma_n})$ on $\rosi{0}{\sigma_n}$, in particular these three characteristics are well defined on $\rosi{0}{\zeta^\infty}$. Finally, we include the point $\Delta$ by setting $\Delta:=\widehat{\partial}$ for those $\omega\in\Omega$ such that $\sigma_n(\omega)=\zeta^\partial(\omega)$ for some $n\in\bbn$.  Since it is now possible (again) to jump to $\Delta\notin \bbr^d$ we define in addition
\[
    K_t:=\Delta \cdot 1_{\rosi{\zeta^\Delta}{+\infty}}
\] 
and obtain $X=\widehat{X} + K$. Since $x+\Delta=\Delta$ for every $x\in\bbr^d$ we can delete the last part of the third characteristic. This last part directly yields, however, the structure of the fourth characteristic. Hence, the theorem is proved. 
\end{proof}


Now we prove our second main result. For the reader's convenience we present the one-dimensional proof, the multi-dimensional versions being alike but notationally more involved. 

\begin{proof}[Proof of Theorem \ref{thm:stoppedsymbol}] 
By Proposition \ref{prop:seperateone} the autonomous semimartingale can be written as $\widetilde{X}=X+K$ where $X$ is a semimartingale with explosion and $K$ a killing process with fourth differential characteristic $a$. 

Let us consider:
\begin{align*}
&\bbe^x \left( e^{i(X^\sigma_t+K^\sigma_t-x)\xi} \right)  \\
\hspace*{4mm}&=\int_{\{K^\sigma_t=\Delta\} } e_\xi(X^\sigma_t-x)\cdot e_\xi(K^\sigma_t) \ d\bbp^x
                                  + \int_{\{K^\sigma_t \neq \Delta\} } e_\xi(X^\sigma_t-x)\cdot e_\xi(K^\sigma_t) \ d\bbp^x \\
		\hspace*{4mm}								 & = \int_{\{K^\sigma_t \neq \Delta\} } e_\xi(X^\sigma_t-x) \ d\bbp^x \\
\end{align*}
Therefore, 
\begin{align}\label{splitting}
\frac{\bbe^x e_\xi(\widetilde{X}^\sigma_t-x)-1}{t}
   = \frac{\int_\Omega e_\xi(X^\sigma_t-x) \ d\bbp^x- 1}{t}  -   \frac{\int_{\{K^\sigma_t =\Delta\}} e_\xi(X^\sigma_t-x) \ d\bbp^x}{t}
\end{align}
the first term on the right-hand-side tends to $-p(x,\xi)$, the classical symbol without $a$ by Theorem 3.6  of \cite{generalizedindices}. In this theorem only the conservative case is considered, by \eqref{stoppingtimeorder} this is sufficient. 


For the second term of \eqref{splitting} we obtain:
\[
\frac{\int_{\{K^\sigma_t =\Delta\}} e_\xi(X^\sigma_t-x) \ d\bbp^x}{t} 
   = \frac{1}{t} \int_{\{K^\sigma_t = \Delta \} } 1 \ d\bbp^x  + \frac{1}{t} \int_{\{K^\sigma_t=\Delta \} } \left(e_\xi(X^\sigma_t-x)-1\right) \ d\bbp^x
\]
Here, since the fourth characteristic is defined as a compensator ($K_t^\sigma=\Delta$ iff $\widetilde{X}_t^\sigma=\Delta$), the first term can be written as (cf. \eqref{stoppingtimeorder} and \cite{jacodshir} Theorem I.3.17(ii))
\[
  \frac{1}{t}\bbe^x  A_t^\sigma = \bbe^x \frac{1}{t}\int_0^t a(\widetilde{X}^\sigma_s) \ ds =\bbe^x \int_0^1 a(\widetilde{X}^\sigma_{ts}) \ ds 
\]
which tends to $a(x)$ for t tending to zero, because $a$ is finely continuous. To this end we use the dominated convergence theorem, the fact that $\widetilde{X}^\sigma$ is bounded on $\rosi{0}{\zeta^\infty}$ and that a singleton is a Lebesgue nullset. The subsequent lemma shows that the remainder term tends to zero. Hence, the result. 
\end{proof}

\begin{lemma}
Let $\widetilde{X}$ be an autonomous semimartingale and $\sigma$ as in Definition \ref{def:symbol}.
Separate $\widetilde{X}=X+K$ as in Proposition \ref{prop:seperateone}. 
Furthermore let $u:\rd \to \bbk$ such that $u|_{\bbr^d} \in C_b^2(\bbr^d)$, $u(\infty)=0$ and $u(\Delta)=0$ where $\bbk=\bbr$ or $=\bbc$.
Then 
\begin{align}\label{limitzero}
   \lim_{t \downarrow 0} \frac{1}{t} \bbe^x \Big( 1_{\{K_t^\sigma=\Delta \} }\left( u(X^\sigma_t) -u(x) \right) \Big)= 0
\end{align}
for every $x\in\bbr$
 \end{lemma}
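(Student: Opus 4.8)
The plan is to exploit the fact that on the event $\{K_t^\sigma = \Delta\}$ the killing time $\zeta^\Delta$ has already occurred before (or at) time $t$, and to estimate the integrand $1_{\{K_t^\sigma=\Delta\}}(u(X_t^\sigma)-u(x))$ by splitting according to whether the pre-stopped explosion part $X$ has moved far from $x$. The key structural observation is \eqref{stoppingtimeorder}: since $\sigma \leq \zeta^\Delta$, on $\{\zeta^\Delta < +\infty\}$ we have $\sigma < \zeta^\Delta$ only if $\sigma = +\infty$ would contradict finiteness, so in fact on the relevant event $\sigma$ and the $\Delta$-killing interact in a controlled way. More concretely: $X^\sigma$ is bounded (it stays in the compact neighborhood $K$ before $\sigma$), hence $|u(X_t^\sigma)-u(x)| \leq 2\|u\|_\infty$ always, and moreover $|u(X_t^\sigma)-u(x)| \leq \|u'\|_\infty \cdot \|X_t^\sigma - x\|$ whenever $X_t^\sigma \in \bbr^d$ (recall $u\in C_b^2$).

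First I would write
\[
  \frac{1}{t}\bbe^x\Big(1_{\{K_t^\sigma=\Delta\}}\big(u(X_t^\sigma)-u(x)\big)\Big)
  = \frac{1}{t}\bbe^x\Big(1_{\{K_t^\sigma=\Delta\}}\,1_{\{\|X_t^\sigma - x\|\leq\varepsilon\}}(u(X_t^\sigma)-u(x))\Big)
  + \frac{1}{t}\bbe^x\Big(1_{\{K_t^\sigma=\Delta\}}\,1_{\{\|X_t^\sigma - x\|>\varepsilon\}}(u(X_t^\sigma)-u(x))\Big).
\]
For the first term, use $|u(X_t^\sigma)-u(x)|\leq \|u'\|_\infty\,\varepsilon$ on $\{\|X_t^\sigma-x\|\leq\varepsilon\}$ (this needs $X_t^\sigma\in\bbr^d$ there, which holds since $X$ is the explosion part and $\{\|X_t^\sigma - x\| \leq \varepsilon\}$ forces finiteness for $\varepsilon$ small), to bound it by $\|u'\|_\infty\,\varepsilon \cdot \frac{1}{t}\bbp^x(K_t^\sigma = \Delta) = \|u'\|_\infty\,\varepsilon\cdot\frac{1}{t}\bbe^x A_t^\sigma$, which by the computation already carried out in the proof of Theorem \ref{thm:stoppedsymbol} converges to $\|u'\|_\infty\,\varepsilon\cdot a(x)$ as $t\downarrow 0$. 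For the second term, bound $|u(X_t^\sigma)-u(x)|\leq 2\|u\|_\infty$ and reduce to showing $\frac{1}{t}\bbp^x(K_t^\sigma=\Delta,\ \|X_t^\sigma - x\| > \varepsilon)\to 0$.

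The main obstacle is this last estimate: $\frac{1}{t}\bbp^x(K_t^\sigma = \Delta,\ \|X_t^\sigma - x\|>\varepsilon)\to 0$. Here I would argue that on $\{K_t^\sigma=\Delta\}$ we have $\zeta^\Delta\leq t$, and by \eqref{stoppingtimeorder} $\sigma\leq\zeta^\Delta\leq t$, so $X_t^\sigma = X_\sigma^\sigma = X_{\sigma-}$ or a jump value; in either case $\{\|X_t^\sigma - x\| > \varepsilon\}$ forces the explosion part $X$ to have exited a ball of radius $\varepsilon$ around $x$ by time $t$ (using right-continuity at $0$, i.e. $X_0 = x$). Thus the event is contained in $\{\zeta^\Delta \leq t\}\cap\{\sup_{s\leq t}\|X_s^\sigma - x\| > \varepsilon\}$. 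The probability of the second factor is $o(1)$ as $t\downarrow 0$ by right-continuity of paths, while $\frac{1}{t}\bbp^x(\zeta^\Delta\leq t) = \frac{1}{t}\bbe^x A_t^\sigma$ stays bounded (it converges to $a(x)$); combining via Cauchy--Schwarz or simply via $\frac1t\bbp^x(\text{both}) \le \big(\frac1t \bbp^x(\zeta^\Delta\le t)\big)\cdot \mathbf 1$ together with the vanishing of $\bbp^x(\sup_{s\le t}\|X^\sigma_s - x\|>\varepsilon)$ is delicate, so instead I would bound $\frac1t \bbp^x(K_t^\sigma=\Delta,\|X_t^\sigma-x\|>\varepsilon)$ by $\frac1t\bbe^x\big(1_{\{\zeta^\Delta\le t\}}1_{\{\sup_{s\le t}\|X^\sigma_s - x\|>\varepsilon\}}\big)$ and note that the compensator identity gives $\frac1t\bbe^x(1_{\{\zeta^\Delta\le t\}}1_F) = \frac1t\bbe^x(A_t^\sigma 1_F) + (\text{mart.\ term})$ for predictable $F$; choosing $F$ the set where the path has already left the $\varepsilon$-ball makes $\bbe^x(A^\sigma_t 1_F)$ small because $A^\sigma$ is absolutely continuous with bounded density and $\bbp^x(F)\to 0$. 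Letting first $t\downarrow 0$ and then $\varepsilon\downarrow 0$ yields \eqref{limitzero}. Finally I would note that the argument is insensitive to whether $\bbk = \bbr$ or $\bbk = \bbc$, since it is an absolute-value estimate throughout.
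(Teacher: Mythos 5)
Your strategy is genuinely different from the paper's: the paper applies It\^o's formula to the bivariate semimartingale $(X^\sigma_t, 1_{\{K_t^\sigma=\Delta\}})'$ with the function $(y,z)'\mapsto (u(y)-u(x))\cdot z$ and disposes of each of the resulting terms separately, whereas you attempt a direct splitting of the expectation over $\{\norm{X_t^\sigma-x}\leq\varepsilon\}$ and its complement. Your first half is fine: on $\{\norm{X_t^\sigma-x}\leq\varepsilon\}$ the bound $\varepsilon\norm{u'}_\infty\cdot\tfrac1t\bbp^x(K_t^\sigma=\Delta)\to\varepsilon\norm{u'}_\infty\,a(x)$ is correct (this is exactly the compensator computation carried out in the proof of Theorem \ref{thm:stoppedsymbol}), and $\varepsilon$ is free. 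The problem sits entirely in the second half, which you correctly identify as the main obstacle but do not actually resolve.

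Two concrete gaps. First, the identity you invoke, $\tfrac1t\bbe^x(1_{\{\zeta^\Delta\leq t\}}1_F)=\tfrac1t\bbe^x(A_t^\sigma 1_F)+(\text{vanishing martingale term})$, is false for general $F\in\cf_t$: the expectation of $\bigl(1_{\{\zeta^\Delta\leq\cdot\}}-A^\sigma\bigr)_t\cdot 1_F$ is zero only for $F\in\cf_0$, or, in integrated form, for \emph{predictable} integrands. Your $F=\{\sup_{s\leq t}\norm{X_s^\sigma-x}>\varepsilon\}$ is $\cf_t$-measurable, and multiplying the martingale by it does not preserve zero expectation. The salvageable version — integrating the predictable process $1_{\{\tau_\varepsilon<s\}}$, where $\tau_\varepsilon$ is the exit time of $X^\sigma$ from the $\varepsilon$-ball, against $d1_{\{\zeta^\Delta\leq s\}}$ versus $a(X_s^\sigma)\,ds$ — only controls the sub-event $\{\tau_\varepsilon<\zeta^\Delta\leq t\}$, i.e.\ where the spatial excursion precedes the killing. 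Second, the complementary sub-event $\{\zeta^\Delta\leq\tau_\varepsilon\leq t\}$, in which $X^\sigma$ leaves the $\varepsilon$-ball only \emph{after} the killing time but before $t$, is not covered by any of your estimates; handling it needs a quantitative conditional maximal inequality of the type $\bbp^x\bigl(\sup_{r\leq s\leq t}\norm{X_s^\sigma-X_r^\sigma}>\varepsilon\mid\cf_r\bigr)\leq c_\varepsilon\,(t-r)$, derived from the locally bounded differential characteristics of the explosion part — an auxiliary lemma you neither state nor prove, and without which the product of an $O(t)$ and an $o(1)$ probability cannot be turned into $o(t)$ for non-independent events. The paper's It\^o-formula route avoids both issues automatically, because there the killing indicator only ever meets the \emph{predictable} integrand $u(X_{s-}^\sigma)-u(x)$ (terms (II), (VII), (VIII) of the expansion), so compensating $d1_{\rosi{\zeta^\Delta}{+\infty}}(s)$ by $a(X_s^\sigma)\,ds$ is legitimate and the integrand vanishes at $s=0$, giving limit $0$ by fine continuity and dominated convergence. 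Your argument could likely be repaired along these lines, but as written the decisive estimate is incomplete.
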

 
This lemma could also be used to derive an alternative proof for Corollary \ref{cor:fellersemimart}. Again we present the one-dimensional proof. 

\begin{proof}   
Let $x\in\bbr$ and $u$ as in the theorem, with values in  $\bbr$.
For the complex valued case separate into real- and imaginary part.
Furthermore, let $M>0$ be such that
\[
  \max\{\norm{u}_\infty, \norm{u'}_\infty, \norm{u''}_\infty \}\leq M< \infty.
\] 
Let the stopping time $\sigma$ be defined as in Definition \ref{def:symbol} where $K$
is an arbitrary compact neighborhood of $x$, such that $K$ is contained in a ball of radius $k$ around zero. 
We use It\^o's formula on the (classical) bivariate semimartingale $(X_t^\sigma, 1_{\{K_t^\sigma =\Delta\} })'$ and the function $(y,z)'\mapsto (u(y)-u(x))\cdot z $. Using $Y_s:=1_{\{K_s^\sigma =\Delta\}} $ as a shorthand we obtain
\allowdisplaybreaks
\begin{align}
\frac{1}{t} &\bbe^x  \Big(\left( u(X^\sigma_t) -u(x) \right) 1_{\{K_t^\sigma=\Delta \} } \Big)\nonumber \\
&= \frac{1}{t} \bbe^x  \left(\int_{0}^t u'(X^\sigma_{s-}) 1_{\{K_s^\sigma=\Delta \} } \ dX^\sigma_s \right) \tag{I}\label{termone}\\
&+ \frac{1}{t} \bbe^x  \left(\int_{0}^t u(X^\sigma_{s-})-u(x) \ dY_s \right)  \tag{II} \label{termtwo} \\
&+ \frac{1}{t} \bbe^x \left(\frac{1}{2}  \int_{0}^t  u''(X^\sigma_{s-})1_{\{K_s^\sigma=\Delta \} } \ d[X^\sigma,X^\sigma]_s^c \right) \tag{III} \label{termthree}\\
&+ \frac{1}{t} \bbe^x \left(\frac{1}{2}  \int_{0}^t  u'(X^\sigma_{s-}) \cdot 1 \ d[X^\sigma,Y]_s^c \right) \tag{IV} \label{termfour}\\
&+ \frac{1}{t} \bbe^x \left(\frac{1}{2}  \int_{0}^t  (u(X^\sigma_{s-})-u(x)) \cdot 0 \ d[Y,Y]_s^c \right) \tag{V} \label{termfive}\\
&+ \frac{1}{t} \bbe^x  \left(\sum_{0\leq s\leq t} \Big(u(X_s^\sigma)-u(X^\sigma_{s-})-u'(X^\sigma_{s-}) \Delta X^\sigma_s \Big)1_{\{K_{s-}^\sigma=\Delta \} }\right) \tag{VI}\label{termsix} \\
&- \frac{1}{t} \bbe^x \left(\sum_{0\leq s\leq t} (u(X^\sigma_{s-})-u(x)) \Delta Y_s\right) \tag{VII} \label{termseven} \\
&+ \frac{1}{t} \bbe^x \left(\sum_{0\leq s\leq t} (u(X_s^\sigma)-u(x)) 1_{\{K_s^\sigma=\Delta\}} - (u(X_s^\sigma)-u(x)) 1_{\{K_{s-}^\sigma=\Delta\}}  \tag{VIII} \label{termeight} \right). 
\end{align}
Term (V) is zero. Term (IV) is zero, too, since $Y$ is a quadratic pure jump semimartingale. Term (II) cancels out with (VII). 
The left-continuous process $X^\sigma_{t-}$ is bounded on $\csi{0}{\sigma}$. Furthermore we have $(\Delta X)^\sigma = (\Delta X^\sigma)$ and $X^\sigma$ admits the stopped characteristics
\begin{equation} \begin{aligned} \label{stoppedchars}
B^\sigma_t(\omega)&=\int_{0}^{t\wedge \sigma(\omega)} \ell(X_s(\omega)) \ ds = \int_0^t \ell (X_s(\omega)) 1_{\csi{0}{\sigma}}(\omega, s) \ ds \\
C_t^\sigma(\omega)&=\int_0^t Q(X_s(\omega)) 1_{\csi{0}{\sigma}}(\omega, s) \ ds \\
\nu^\sigma(\omega;ds,dy)&:=1_{\csi{0}{\sigma}}(\omega, s) \ N(X_s(\omega),dy) \ ds
\end{aligned} \end{equation}
with respect to the fixed cut-off function $\chi$. One can now set the integrand at the right endpoint of the stochastic support to zero, as we are integrating with respect to Lebesgue measure:
\begin{align*}
B^\sigma_t(\omega)&=\int_0^t \ell (X_s(\omega)) 1_{\rosi{0}{\sigma}}(\omega, s) \ ds \\
C_t^\sigma(\omega)&=\int_0^t Q(X_s(\omega)) 1_{\rosi{0}{\sigma}}(\omega, s) \ ds \\
\nu^\sigma(\omega;ds,dy)&=1_{\rosi{0}{\sigma}}(\omega, s) \ N(X_s(\omega),dy) \ ds.
\end{align*}
In the first two lines the integrand is now bounded, because $\ell$ and $Q$ are locally bounded and $\norm{X^{\sigma}_s(\omega)}<k$ on $\roi{0}{\sigma(\omega)}$ for every $\omega \in \Omega$.  
In what follows we will deal with the remaining terms one-by-one.
To calculate the first term we use the canonical decomposition of the semimartingale (see \cite{jacodshir}, Theorem II.2.34) which we write as follows 
\begin{equation} \begin{aligned}
X_t^\sigma=X_0 + X_t^{\sigma,c} &+ \int_0^{t \wedge \sigma} \chi(y)y \ \Big(\mu^{X^\sigma}(\cdot;ds,dy)-\nu^\sigma(\cdot;ds,dy)\Big) \\ &+\check{X}^\sigma(\chi)+B_t^\sigma(\chi).\label{candec}
\end{aligned} \end{equation} 
where $\check{X}_t=\sum_{s\leq t}(\Delta X_s (1- \chi(\Delta X_s))$.
Therefore, term \eqref{termone} can be written as
\begin{align*}
\frac{1}{t} \bbe^x  \Big(\int_{0}^t 1_{\{K_s^\sigma=\Delta \} }u'(X^\sigma_{s-}) \ d\Big( \underbrace{X_t^{\sigma,c}}_{\text{(IX)}} &+ \underbrace{\int_0^{t \wedge \sigma} \chi(y)y \ \Big(\mu^{X^\sigma}(\cdot;ds,dy)-\nu^\sigma(\cdot;ds,dy)\Big)}_{\text{(X)}} \\ &+\underbrace{\check{X}^\sigma(\chi)}_{\text{(XI)}} + \underbrace{B_t^\sigma(\chi) }_{\text{(XII)}}\Big)\Big) 
\end{align*}
We use the linearity of the stochastic integral. Our first step is to prove for term (IX)
\[
  \bbe^x  \int_{0}^t 1_{\{K_s^\sigma=\Delta \} }u'(X_{s-}^\sigma) \ dX_s^{\sigma,c} = 0.
\]
The integral $(1_{\{K_t^\sigma=\Delta \} }u'(X_{t-}^\sigma)) \bullet X_t^{\sigma,c}$ is a local martingale, since $X_t^{\sigma,c}$ is a local martingale. To see that it is indeed a martingale, we calculate the following:
\begin{align*}
&\left[(1_{\{K^\sigma=\Delta \} }u'(X^\sigma)) \bullet X^{\sigma,c}, (1_{\{K^\sigma=\Delta \} }u'(X^\sigma-x)) \bullet X^{\sigma,c} \right]_t \\
&\hspace*{10mm}= \int_0^t (1_{\{K_s^\sigma=\Delta \} }u'(X^\sigma_s))^2 1_{\csi{0}{\sigma}}(s) \ d[X^{c},X^{c}]_s \\
&\hspace*{10mm}= \int_0^t \Big((1_{\{K_s^\sigma=\Delta \} }u'(X^\sigma_s))^2 1_{\rosi{0}{\sigma}}(s)Q(X_s) \Big) \ ds
\end{align*}
where we have used several well known facts about the square bracket. The last term is uniformly bounded in $\omega$ and therefore, finite for every $t\geq 0$.
This means that $(1_{\{K_t^\sigma=\Delta \} } u'(X_t^\sigma)) \bullet X_t^{\sigma,c}$ is an $L^2$-martingale which is zero at zero and therefore, its expected value is constantly zero. \newline
The same is true for the integrand (X). We show that the function $(\omega,s,y)\mapsto 1_{\{K_s^\sigma=\Delta \} }u'(X^\sigma_{s-}) y \chi(y)$ is in the class $F_p^2$ of Ikeda and Watanabe (see \cite{ikedawat}, Section II.3), that is,
\begin{align*}
 \bbe^x \int_0^t \int_{y\neq 0} \abs{u'(X_{s-}^\sigma) \cdot y\chi(y)}^2 \nu^\sigma(\cdot;ds,dy) <\infty.
\end{align*}
To prove this we observe
\begin{align*}
 \bbe^x &\int_0^t \int_{y\neq 0} \abs{1_{\{K_s^\sigma=\Delta \} } u'(X_{s-}^\sigma)}^2 \cdot \abs{y\chi(y)}^2 \nu^\sigma(\cdot;ds,dy) \\
&= \bbe^x \int_0^t \int_{y\neq 0} M^2 \abs{y\chi(y)}^2 1_{\rosi{0}{\sigma}}(\omega, s) N(X_s,dy) \ ds.
\end{align*}
Since we have by hypothesis $\norm{\int_{y\neq 0}(1\wedge y^2) 1_{\rosi{0}{\sigma}}\ N(\cdot,dy)}_\infty < \infty$ this expected value is finite. Therefore, 
\begin{align*}
&\int_0^t  1_{\{K_s^\sigma=\Delta \} }u'(X_{s-}^\sigma) \ d\left(\int_0^{s\wedge \sigma} \int_{y\neq 0} \chi(y)y \ (\mu^{X^\sigma}(\cdot;dr,dy)-\nu^\sigma(\cdot;dr,dy)) \right) \\
&\hspace{10mm}=\int_0^t\int_{y\neq 0} \Big(1_{\{K_s^\sigma=\Delta \} }u'(X_{s-})  \chi(y)y \Big) (\mu^{X^\sigma}(\cdot;ds,dy)-\nu^\sigma(\cdot;ds,dy))
\end{align*}
is a martingale. The last equality follows from \cite{jacodshir}, Theorem I.1.30.

Now we deal with the third term \eqref{termthree}. Here we have
\begin{align*}
[X^\sigma,X^\sigma]_t^c= [X^c, X^c]_t^\sigma=C_t^\sigma &=(Q(X_{t}) \bullet t)^\sigma
= (Q(X_t) \cdot 1_{\rosi{0}{\sigma}}(t))\bullet t
\end{align*}
and therefore,
\begin{align*}
\frac{1}{2} \int_{0}^t (1_{\{K_s^\sigma=\Delta \} }u''(X^\sigma_{s-})) \ d[X^\sigma,X^\sigma]_s^c
&=  \frac{1}{2} \int_{0}^t  1_{\{K_s^\sigma=\Delta \} }u''(X^\sigma_{s-})  Q(X_{s}) \cdot 1_{\rosi{0}{\sigma}}(t) \ ds.
\end{align*}
Since $Q$ is finely continuous and locally bounded we obtain by dominated convergence
\begin{align*}
&\lim_{t\downarrow 0} \frac{1}{2}\frac{1}{t} \bbe^x \int_0^t  1_{\{K_s^\sigma=\Delta \} }u''(X_{s}) Q(X_{s}) 1_{\rosi{0}{\sigma}}(s) \ ds \\
&= \lim_{t\downarrow 0} \frac{1}{2} \bbe^x \int_0^1 1_{\{K_{st}^\sigma=\Delta \} } u''(X_{st}) Q(X_{st}) 1_{\rosi{0}{\sigma}}(st) \ ds =0
\end{align*}
For the finite variation part of the first term, i.e, (XII), we obtain analogously the limit zero.
Finally we plug together the sum in \eqref{termsix} and part (XI) of \eqref{termone}: 
\begin{align*}
& \sum_{0\leq s\leq t}1_{\{K_{s-}^\sigma=\Delta \} } \Big(u(X_s^\sigma)-u(X^\sigma_{s-})-u'(X^\sigma_{s-}) \Delta X^\sigma_s \chi(\Delta X_s^\sigma )\Big) \\
&\hspace{10mm}= \sum_{0\leq s\leq t} 1_{\{K_{s-}^\sigma=\Delta \} }\Big(u(X^\sigma_{s-}+\Delta X_s^\sigma)-u(X^\sigma_{s-})-u'(X^\sigma_{s-}) \Delta X^\sigma_s \chi(\Delta X_s^\sigma )\Big) \\
&\hspace{10mm}=\int_{]0,t]\times \bbr^d} 1_{\{K_{s-}^\sigma=\Delta \} }\Big( u(X^\sigma_{s-}+y)-u(X^\sigma_{s-})-u'(X^\sigma_{s-}) y \chi(y)   \Big) \  \mu^{X^\sigma}(\cdot;ds,dy) \\
&\hspace{10mm}=\int_{]0,t]\times \bbr^d} 1_{\{K_{s-}^\sigma=\Delta \} }\Big( u(X^\sigma_{s-}+y)-u(X^\sigma_{s-})-u'(X^\sigma_{s-}) y \chi(y)   \Big)   \ N(X_s,dy) \ ds.
\end{align*} 
Here, we have used the fact that it is possible to integrate with respect to the compensator of a random measure instead of the measure itself, if the integrand is in $F_p^1$ (see \cite{ikedawat}, Section II.3). This follows by a Taylor expansion and the fact that $u''$ is bounded. 
By the continuity assumption on $N(x,dy)$ we obtain using dominated convergence
\begin{align*}
\lim_{t \downarrow 0} \frac{1}{t} \bbe^x \int_{0}^t \int_{\bbr^d} 1_{\{K_s^\sigma=\Delta\}} \Big( u(X^\sigma_{s-}+y)-u(X^\sigma_{s-})-u'(X^\sigma_{s-}) y \chi(y)  \Big) \ N(X_{s},dy) \ ds  = 0.
\end{align*}
Finally we obtain for \eqref{termeight}
\begin{align*}
&\frac{1}{t} \bbe^x \sum_{0\leq s\leq t} 1_{\{s=\zeta^\Delta\}} (u(X_s^\sigma)-u(x)) \\
&\hspace{10mm}=  \frac{1}{t} \bbe^x  \int_0^t \left(u(X_{s-}^{\zeta^\Delta\wedge\sigma}) -u(x) \right) \ d \left(\Delta(1_{\rosi{\zeta^\Delta}{+\infty}}(s) \right) \\
&\hspace{10mm}=  \frac{1}{t} \bbe^x  \int_0^t \left(u(X_{s-}^{\zeta^\Delta\wedge\sigma}) -u(x) \right) \ d \left( \int_0^s a(X^\sigma_r) \ dr \right) \\
&\hspace{10mm}=  \bbe^x \int_0^1 (u(X_{st}^{\zeta^\Delta\wedge\sigma}) -u(x)) a(X^\sigma_{st}) \ ds
\end{align*}
and hence the limit zero again by dominated convergence. 
\end{proof}

Let us give a short example which shows, that \eqref{limitzero} might fail to be true  outside the class under consideration (even in simple cases).

\begin{example}
Let $E:=\bbr$, $x=0$ and $u\in C_c^\infty(\bbr)$ be equal to the identity on $[-1,1]$. Let $\sigma$ be the first exit time from $[-1,1]$. Define $X_t:=\sqrt{t}$. This deterministic process is independent from  every other stochastic process (and hence from $(K_t)_{t\geq 0}$ which we will define). We obtain for $t\leq 1$:
\[
 \frac{1}{t} \bbe^x 1_{\{K_t^\sigma=\Delta\}} (u(X_t^\sigma)-u(x))= \frac{1}{\sqrt{t}} \cdot \bbp(K_t=\Delta)
\]
Now define $(K_t)_{t\geq 0}$ in a way that for $t_j:=1 / 2^{2j} $ we have $\bbp(K_{t_j}=\Delta)= 1/2^j$. Hence $t_j\downarrow 0$ for $j\to \infty$ and 
\[
   \frac{1}{\sqrt{t_j}} \cdot \bbp(K_{t_j}=\Delta)=1
\]
for every $j\in\bbn$ and the limit \eqref{limitzero} does not exist. 
\end{example}

\textbf{Acknowledgements:} Financial support by the DFG (German Science Foundation) for the project SCHN 1231/2-1 is gratefully acknowledged. The author would like to thank Ph. Protter for the interesting discussions at Columbia University \change{and an anonymous referee whose comments have helped to significantly improve the paper}. 



\end{document}